%% file: main.tex
\documentclass[12pt]{article}

\usepackage{amsmath, amsthm, amssymb}
\usepackage{mathtools}
\usepackage{hyperref}
\usepackage{enumitem}
\usepackage{geometry}
\newtheorem{theorem}{Theorem}[section]
\newtheorem{lemma}[theorem]{Lemma}
\newtheorem{proposition}[theorem]{Proposition}
\newtheorem{corollary}[theorem]{Corollary}
\theoremstyle{definition}

\newcommand{\R}{\mathbb{R}}
\newcommand{\T}{\mathbb{T}}
\newcommand{\Z}{\mathbb{Z}}
\newcommand{\Ss}{\mathbb{S}^m}

\usepackage{comment}
\usepackage[most]{tcolorbox}
\newif\ifshowdetails

\usepackage{xcolor}
\newif\ifshowannotations
\showannotationstrue  

\usepackage{soul}
\sethlcolor{yellow}

\title{Thresholding Scheme for the \\ Half-Harmonic Map Heat Flow}
\author{Kilian Koch\thanks{Corresponding author.
  Lehrstuhl f\"ur Angewandte Analysis, RWTH Aachen University,
  Kreuzherrenstr.~2, 52062 Aachen, Germany.
  Email: \texttt{koch@math1.rwth-aachen.de}.
  Phone: +49\,241\,80-94586.}
\and
Christof Melcher\thanks{Lehrstuhl f\"ur Angewandte Analysis,
  RWTH Aachen University,
  Kreuzherrenstr.~2, 52062 Aachen, Germany.
  Email: \texttt{melcher@rwth-aachen.de}.
  Phone: +49\,241\,80-94585.}
\and
Endre S\"uli\thanks{Mathematical Institute, University of Oxford,
  Andrew Wiles Building, Radcliffe Observatory Quarter,
  Woodstock Road, Oxford OX2 6GG, United Kingdom.
  Email: \texttt{endre.suli@maths.ox.ac.uk}.}
}
\date{\today}

\begin{document}

\maketitle

\begin{abstract}

We propose a thresholding scheme for the half-harmonic map heat flow
from the flat torus $\T^d$ into the unit sphere $\Ss$ in $\R^{m+1}$, whose iterates are given by convolution with
the Poisson kernel, replacing the Gaussian kernel of the classical
Merriman--Bence--Osher algorithm, and pointwise normalization.
We prove that the piecewise constant interpolants subconverge to a
weak solution of the half-harmonic maps heat flow that attains the initial map strongly in the energy space and satisfies the energy-dissipation inequality.
For a spectrally truncated variant, convergence holds under a condition on the step size. A key ingredient is the derivation of the precise energy-dissipation identity directly from a refined within-step estimate, without resorting to De Giorgi interpolation typically used in the minimizing movement framework. This provides a direct way to recover the appropriate energy-dissipation property in the limit and, in turn, enables weak-strong uniqueness: every energy-dissipating weak solution coincides with the strong solution whenever the latter exists.
\end{abstract}

\section{Introduction and main results}
\label{sec:intro}

\input{introduction}



\section{Preliminaries}
\label{sec:prelim}

\input{preliminaries}

\section{Thresholding Scheme and Compactness}
\label{sec:compactness}

%

\input{compactness}

\section{Weak-strong uniqueness}
\label{sec:wsu}

%

\input{weak_strong}

\section*{Acknowledgements}
This work is funded by the Deutsche Forschungsgemeinschaft (DFG, German Research Foundation) -- project number \href{https://gepris.dfg.de/gepris/projekt/442047500}{442047500} -- through the Collaborative Research Center \href{https://sfb1481.rwth-aachen.de/}{``Sparsity and Singular Structures'' (SFB 1481)}.

\medskip\noindent
During the preparation of this manuscript, KK used Claude Opus 5
(Anthropic) as a discussion partner to explore and test various
mathematical ideas and strategies. All mathematical arguments and steps
presented in the manuscript were developed independently by the
authors. The authors take full responsibility for the content of the
manuscript.

\bibliographystyle{plain}
\bibliography{bib}

\end{document}

%% file: introduction.tex
The half-harmonic map heat flow is the $L^2$-gradient flow of the
half-Dirichlet energy
\[
  E(u)
  \;=\;
  \frac{c_d}{4}\int_{\T^d}\!\int_{\R^d}
    \frac{|u(x)-u(y)|^2}{|x-y|^{d+1}}\,dy\,dx
\]
for maps $u\colon \T^d\to\Ss$.
  The constant $c_d = \frac{\Gamma((d+1)/2)}{\pi^{(d+1)/2}}$
  is chosen so that
  \[
  E(u) = \tfrac{1}{2}\|(-\Delta)^{1/4}u\|_{L^2}^2
  \]
  where fractional Laplacians $(-\Delta)^{s/2}$ are defined via Fourier multipliers $|k|^s$ with $k \in \Z^d$. 
 Thus $E$ conincides with the usual homogeneous $\dot{H}^\frac{1}{2}$ norm. In abstract terms,
 the $L^2$ gradient flow equation takes the form
\[
\langle \partial_t u, \Phi \rangle_{L^2} + dE(u)\langle \Phi \rangle =0 
\]
for all sufficiently regular $\Phi=\Phi(x,t) \in \R^{d+1}$ with $\Phi \perp u$ almost everywhere.
Explicitly
\[
  \partial_t u + (-\Delta)^{1/2} u
  \;=\;
  \lambda \, u,
\]
with the Lagrange multiplier \[\displaystyle{\lambda(x)= \frac{c_d}{2}\int_{\R^d}
    \frac{|u(x)-u(y)|^2}{|x-y|^{d+1}}\,dy} \]
cf. e.g. \cite{MillotSire2015}.
In the framework of the fractional calculus introduced in \cite{Mazowiecka2018} $\lambda =|d_{\frac{1}{2}}u|^2_{\mathrm{od}}$
where $d_{\frac{1}{2}}$ denotes the fractional gradient and
$|\cdot|_{\mathrm{od}}$ the associated off-diagonal norm
(see Section~\ref{sec:prelim} for precise definitions), so that
\begin{equation}\label{eq:HHMHF}
  \partial_t u + (-\Delta)^{1/2} u
  = \left|d_{\frac{1}{2}}u \right|^2_{\mathrm{od}} \, u,
\end{equation}
revealing the formal analogy
with the classical harmonic map heat flow equation
\begin{equation} \label{eq:HMHF}
\partial_t u - \Delta u = |\nabla u|^2 u,
\end{equation}
corresponding to the $L^2$-gradient flow of the Dirichlet energy
$F(u)=\frac{1}{2}\int_{\T^d}|\nabla u|^2\,dx$.
Despite the formal similarity, \eqref{eq:HHMHF} has substantially different features, 
including a lower critical dimension $d=1$, rather than $d=2$
for the harmonic map heat flow, as well as new analytical challenges and open questions concerning singularity formation.

Critical points of $E$, known as half-harmonic maps,
were introduced by Da~Lio and Rivi\`ere~\cite{Francesca2011-us}, see also \cite{MillotSire2015, Moser2011}.
In the critical one-dimensional setting \(d=1\), it provides a genuinely 
nonlocal fractional counterpart to the conformally invariant harmonic 
problem in dimension \(d=2\). In this context, the half-harmonic map heat flow for maps
from \(\mathbb{S}^1\) was introduced and studied by
Wettstein~\cite{Wettstein2021,Wettstein2022,Wettstein2023}, who
established existence, uniqueness, and regularity results, including
local well-posedness in the critical space \(H^{1/2}\) and a bubbling
analysis at possible blow-up points. Independently, Struwe~\cite{Struwe2024} used the
Dirichlet-to-Neumann representation of the half-Laplacian and the
corresponding Plateau-flow interpretation, providing a detailed
description of singular times fully analogous to his classical result
for the harmonic map heat flow from surfaces~\cite{Struwe1985}.
He also proved uniqueness under additional regularity assumptions,
while Wright~\cite{Wright2024} subsequently extended uniqueness to
weaker solutions satisfying an almost-monotone energy condition. 
Concerning long-time behaviour, Sire, Wei, and Zheng~\cite{SireWeiZheng2021} 
constructed solutions exhibiting infinite-time blow-up for the flow from \(\mathbb{R}\) into
\(\mathbb{S}^1\), while the possibility of finite-time singularities
remains open. 

In higher spatial dimensions \(d>1\), existence and regularity results are
available for a large variety of related fractional geometric flows. Schikorra, Sire,
and Wang~\cite{SchikorraSireWang2017} established the existence of global
weak solutions for the geometric flow associated with their general
integro-differential energies, including the half-Dirichlet energy as a special case.
Their construction is based on a minimizing-movement scheme and applies, in
particular, to targets given by spheres and Riemannian homogeneous manifolds.
Existence and partial regularity of related space-time fractional flow has been studied by
Hyder, Segatti, Sire, and Wang~\cite{HyderSegattiSireWang2022}
starting from a Ginzburg-Landau approximation. Complementary small data global and
large data local well-posedness results for half-harmonic heat flows into spheres
in scaling critical function spaces were obtained in~\cite{MelcherSakellaris2019} and, more recently, 
in \cite{KochMelcher2025} using fixed point iterations based on the mild formulation of~\eqref{eq:HHMHF}.\\

In this work, we propose and analyze a new scheme for~\eqref{eq:HHMHF} that belongs to the family of thresholding algorithms originating from the work of Merriman, Bence, and Osher~\cite{MBO1992}. They introduced the diffuse-and-threshold procedure for the motion of hypersurfaces by mean curvature, which alternates diffusion by the heat kernel $G_h$ with pointwise thresholding. Building on the interpretation of this procedure as a minimizing-movements scheme by Esedoglu and Otto~\cite{EsedogluOtto2015}, Laux and Otto~\cite{LauxOt} established convergence to a weak formulation of mean curvature flow.
Laux and Yip~\cite{LauxYip2019} subsequently extended this framework to mean curvature flows in codimension two and to harmonic map heat flows~\eqref{eq:HMHF}, replacing the thresholding step by pointwise projection onto the sphere. The minimzing movement interpretation is based on the thresholding energy
\[
F_h(u) = \frac{1}{2h}\int_{\T^d} (1-u\cdot G_h*u)\,dx
\]
and the squared distance
\[
d^2_h(u,v)= \int_{\T^d} (u-v)\cdot G_h*(u-v)\,dx
\]
approximating the Dirichlet energy and $L^2$-distance, respectively, as $h\to 0$.
Replacing $G_h$ by the Poisson kernel $P_h$, the semigroup kernel of the half-Laplacian,
yields a scheme whose thresholding energy 
\begin{equation} \label{eq:aprox_energy}
 E_h(u) := \frac{1}{2h} \int_{\mathbb{T}^d}
    \bigl(1 - u \cdot P_h * u \bigr)\, dx,
\end{equation}
converges to the half-Dirichlet energy instead.
We therefore study the iteration
\begin{equation}\label{eq:scheme}
  u^{n+1} \;=\; \frac{P_h * u^n}{|P_h * u^n|}.
\end{equation}
Denoting the piecewise constant interpolant in time of the sequence $u^n$ by
$u^h(t) = u^n$ for $nh \le t < (n+1)h$ with discrete
time derivative $\partial_t^h u^h(t) = \frac{1}{h} (u^h(t+h) - u^h(t))$, so that
$\partial_t^h u^h = \frac{1}{h} (u^n - u^{n-1})$ for $(n-1)h \le t <nh$ and
$n = 1, \dots, \lfloor T/h \rfloor$, and a fixed terminal time $T > 0$,
we show the following convergence result:

\begin{theorem}
\label{thm:main}
Let $u^0 \in H^{1/2}(\mathbb{T}^d;\mathbb{S}^m)$.
Then there is a sequence $h_n \to 0$ such that the piecewise constant
interpolant $u^{h_n}$ of \eqref{eq:scheme} satisfies
\begin{align*}
  u^{h_n} &\to u && \text{in } L^2(\mathbb{T}^d \times (0,T)), \\
  (-\Delta)^{1/4} (P_{h_n/2} * u^{h_n})
    &\rightharpoonup (-\Delta)^{1/4} u
    && \text{in } L^2(\mathbb{T}^d \times (0,T)), \\
  \partial_t^{h_n} (P_{h_n/2} * u^{h_n})
    &\rightharpoonup \partial_t u
    && \text{in } L^2(\mathbb{T}^d \times (0,T)),
\end{align*}
for all $T>0$ as $n \to \infty$. The limit 
$u\in L^\infty(0,T;\,H^{1/2}(\mathbb{T}^d;\mathbb{S}^m))$ is a weak solution of \eqref{eq:HHMHF}
satisfying the energy-dissipation inequality
\begin{equation}\label{eq:energy-ineq-main}
  E(u(t)) + \int_0^t \|\partial_t u\|_{L^2}^2\,ds
  \;\leq\; E(u^0) \quad \text{for a.e.} \;\;t \in (0,T)
\end{equation}
and attaining the initial data $u^0$ strongly in $H^{1/2}(\mathbb{T}^d)$.
The claims hold true for a spectrally truncated variant in which $P_h$ is replaced by
its Fourier projection $P_h^N$ onto the modes $|k| \le N$ under the condition
\[
hN - d(d+1)\log(1/h) \to \infty.
\]
\end{theorem}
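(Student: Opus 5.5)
We follow the minimizing-movement route of Esedoglu--Otto and Laux--Yip, with the heat kernel $G_h$ replaced by the Poisson kernel $P_h$, whose multiplier is $e^{-h|k|}$.

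\textbf{Step 1: variational interpretation and monotonicity.} Since $\widehat{P_h}(k)=e^{-h|k|}\in(0,1]$ and $P_h=P_{h/2}*P_{h/2}$, the quadratic form $v\mapsto\int v\cdot P_h*v$ is positive. Hence
\[
d_h^2(u,v)=\int(u-v)\cdot P_h*(u-v)\,dx = \|P_{h/2}*(u-v)\|_{L^2}^2 .
\]
For $|u|=|v|=1$ one has
\[
E_h(u)+\tfrac{1}{2h}d_h^2(u,u^n) = \tfrac{1}{2h}\int\bigl(1-2u\cdot P_h*u^n + u^n\cdot P_h*u^n\bigr)\,dx + \text{const}.
\]
Because $u\mapsto E_h(u)$ is concave on the convex hull of sphere-valued maps, the scheme \eqref{eq:scheme} minimizes the linearization $-\int u\cdot P_h*u^n$ pointwise over $|u|=1$. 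Concavity then gives the discrete dissipation inequality
\[
E_h(u^{n+1})+\tfrac{1}{2h}d_h^2(u^{n+1},u^n)\le E_h(u^n).
\]
Summing over steps yields
\[
E_h(u^N)+\frac h2\sum_n\|P_{h/2}*\partial_t^h u^h\|_{L^2}^2\le E_h(u^0).
\]
In Fourier variables, $E_h(u)=\frac1{2h}\sum_k(1-e^{-h|k|})|\hat u(k)|^2$, using $|u|=1$. Since $\frac{1-e^{-h|k|}}{h}\le |k|$, we get $E_h(u^0)\le E(u^0)$, which supplies uniform bounds.

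\textbf{Step 2: compactness.} Set $v^h=P_{h/2}*u^h$. From $\frac{1-e^{-h|k|}}{h}\ge c\min(|k|,1/h)$ together with $e^{-h|k|}\le 1$, we get uniform bounds on $\|(-\Delta)^{1/4}v^h\|_{L^\infty L^2}$ and on $\|\partial_t^h v^h\|_{L^2L^2}$. The high-mode part satisfies $\|u^h-v^h\|_{L^2}^2\lesssim hE_h$. An Aubin--Lions/Kolmogorov argument adapted to piecewise constant interpolants (time shifts controlled by $\partial_t^h v^h$, space shifts by the $H^{1/2}$ bound) gives strong $L^2$ convergence of $v^h$, and hence of $u^h$, to some $u$ with $|u|=1$ almost everywhere. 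The weak limits of $(-\Delta)^{1/4}v^h$ and of $\partial_t^h v^h$ are identified with $(-\Delta)^{1/4}u$ and $\partial_t u$ by testing against smooth functions.

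\textbf{Step 3: Euler--Lagrange equation and energy inequality.} The Euler--Lagrange equation of the step is $u^{n+1}\wedge P_h*u^n=0$, equivalently $P_h*u^n=|P_h*u^n|\,u^{n+1}$. Write
\[
P_h*u^n=u^{n+1}-h\,\partial_t^h u - h\,\tfrac{1-P_h}{h}*u^n .
\]
Taking the wedge product with $u^{n+1}$ and testing with $\xi\in C^\infty$ gives
\[
\int \partial_t^h u\wedge u^{n+1}\cdot\xi + \int \tfrac{1-P_h}{h}u^n\wedge u^{n+1}\cdot\xi=0 .
\]
We pass to the limit in the second term by splitting it symmetrically through $P_{h/2}$ and writing it as a commutator with $\xi$; the commutator estimate $\|[\tfrac{1-P_h}{h},\xi]\|\lesssim\|\xi\|_{C^{1}}$ on $L^2$, combined with the weak convergence of the $\dot H^{1/2}$-type quantities and the strong $L^2$ convergence, yields the wedge formulation $\partial_t u\wedge u+(-\Delta)^{1/2}u\wedge u=0$, which is equivalent to \eqref{eq:HHMHF} for $|u|=1$. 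Time shifts between $u^n$ and $u^{n+1}$ are controlled by $h\|\partial_t^h\cdot\|$.

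For the energy-dissipation inequality \eqref{eq:energy-ineq-main}, lower semicontinuity of $E_h$ (Fourier liminf) and of $\|\partial_t^h v^h\|^2$ handle the left-hand side. The catch is the factor $\frac12$ in Step 1, which would only give $\frac12\int\|\partial_t u\|^2$. \textbf{This is the main obstacle.} To fix it, I would prove a refined within-step estimate: using the pointwise optimality $u^{n+1}=P_hu^n/|P_hu^n|$, the defect $E_h(u^n)-E_h(u^{n+1})-\frac1{2h}d_h^2$ equals $\frac1h\int(|P_hu^n|-u^{n+1}\cdot P_hu^{n+1})$-type terms. These should be bounded below by another $\frac1{2h}d_h^2$ up to errors that vanish after summation, for instance $h\sum\int|u^{n+1}-u^n|^2(1-|P_hu^n|)$. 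Those errors would in turn be controlled by $E_h$ and $h\sum\|\partial_t^h\cdot\|^2$ times $o(1)$.

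Strong attainment of the initial data then follows: the energy inequality gives $\limsup_{t\to0}E(u(t))\le E(u^0)$, while weak continuity gives $u(t)\rightharpoonup u^0$ in $H^{1/2}$.

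\textbf{Step 4: truncated variant.} Replace $P_h$ by $P_h^N$. Monotonicity persists since the multiplier lies in $[0,1]$. The only new difficulty is that $|P_h^N*u|$ could vanish or be small. The truncation error satisfies $\|P_h u-P_h^Nu\|_\infty\lesssim \sum_{|k|>N}e^{-h|k|}\lesssim h^{-d}N^{d-1}e^{-hN}$ up to polynomial factors. Summed over $T/h$ steps and combined with derivative losses, this produces a factor like $h^{-d(d+1)}e^{-hN}$, which tends to $0$ exactly under $hN-d(d+1)\log(1/h)\to\infty$. All estimates above then carry over with vanishing perturbations.
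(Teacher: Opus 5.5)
Steps 1--2 and your overall architecture (antisymmetrized Euler--Lagrange equation, lower semicontinuity) match the paper, and you locate the real difficulty correctly. The within-step defect is in fact exactly $E_h(u^n)-E_h(u^{n+1})-\tfrac{1}{2h}d_h^2(u^{n+1},u^n)=\tfrac{1}{2h}\int_{\T^d}|P_h*u^n|\,|u^{n+1}-u^n|^2\,dx$; this is the paper's energy-dissipation identity. The gap is in how you propose to use it. Comparing with $\frac{1}{2h}d_h^2$ via $\|u^{n+1}-u^n\|_{L^2}\ge\|P_{h/2}*(u^{n+1}-u^n)\|_{L^2}$ leaves the error $\frac{1}{2h}\sum_n\int(1-|P_h*u^n|)\,|u^{n+1}-u^n|^2\,dx=\frac{h}{2}\sum_n\int(1-|P_h*u^n|)\,|\partial_t^h u^h|^2\,dx$. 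Note the scaling: it is not $h\sum_n\int|u^{n+1}-u^n|^2(\cdots)$, which would be trivially $O(h)$. Nothing you have makes this error vanish:
\begin{itemize}
\item The weight $1-|P_h*u^n|$ is small only in $L^1$, since $\int(1-|P_h*u^n|)\le 2hE_h(u^0)$; it is of order one wherever $P_h*u^n$ nearly vanishes.
\item There is no unweighted $L^2$ bound on $\partial_t^h u^h$, only on $P_{h/2}*\partial_t^h u^h$ and on $|P_h*u^h|^{1/2}\partial_t^h u^h$.
\item On $\{|P_h*u^n|\le\frac12\}$ the only information is $|\partial_t^h u^h|\le 2/h$ on a set of measure $O(h)$, which bounds the error only by $O(1/h)$.
\item On $\{|P_h*u^n|>\frac12\}$ you would need equi-integrability of the weighted dissipation density, which the energy bounds do not provide.
\end{itemize}
So the claimed control ``by $E_h$ and $h\sum\|\partial_t^h\cdot\|^2$ times $o(1)$'' does not follow.

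The missing idea is that no smallness is needed, only identification of a weak limit on a good set. Keep both dissipation terms of the identity and prove $\mathbf{1}_{A_h}|P_h*u^h|^{1/2}\partial_t^h u^h\rightharpoonup\partial_t u$ in $L^2$ with $A_h\subset\{|P_h*u^h|>\frac12\}$; weak lower semicontinuity then yields $\frac12\int_0^t\|\partial_t u\|_{L^2}^2$ from each term. The identification moves $\partial_t^h$ off $u^h-P_{h/2}*u^h$ (which is $O(\sqrt h)$ in $L^2$) by discrete integration by parts, so $A_h$ cannot be allowed to change at every step. The paper therefore intersects $\Gamma_n=\{|P_h*u^n|>\frac12\}$ over blocks of $m\approx h^{-1/2}$ steps. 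This gives $|A_h^c|\lesssim mh$, the jumps at block boundaries cost $O(m^{-1/2})$, and the weight $|P_h*u^h|^{1/2}$ is then inserted at cost $O(\sqrt h)$.

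Two smaller points. In Step 3, your grouping $\partial_t^h u\wedge u^{n+1}+\frac{1-P_h}{h}u^n\wedge u^{n+1}$ cannot be controlled with your bounds: the first term contains the unsmoothed $\partial_t^h u$, and the shift error $\frac{1-P_h}{h}u^n\wedge(u^{n+1}-u^n)$ is only $O(h^{-1/2}\|u^{n+1}-u^n\|_{L^2})$. The same relation $P_h*u^n\parallel u^{n+1}$ also gives $(P_h*\partial_t^h u)\wedge u^{n+1}+\frac{1-P_h}{h}u^{n+1}\wedge u^{n+1}=0$. With that grouping your commutator route works, using $\int(\frac{1-P_h}{h}u_i\,u_j-\frac{1-P_h}{h}u_j\,u_i)\zeta\,dx=-\langle u_j,[\frac{1-P_h}{h},\zeta]u_i\rangle_{L^2}$. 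It is a valid, shorter alternative to the paper's Fourier-kernel estimate that needs only strong $L^2$ convergence, though it gives no $O(\sqrt h)$ rate. In Step 4, the condition on $N$ is not an accumulated-error condition; comparing trajectories would need stability of $v\mapsto v/|v|$ near $v=0$, which fails. Instead it ensures $\|P_h-P_h^N\|_{L^\infty}\lesssim h^{-d}e^{-hN/d}=o(h)$, hence $P_h^N>0$. Positivity is what lets Jensen's inequality, $|P_h^N*u|\le 1$ and the bounds on $|\Gamma_n^c|$ be rerun directly for the truncated scheme.
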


The proof builds on the conceptual framework established in \cite{LauxOt, LauxYip2019} with significant new challenges due to the nonlocality. A key new ingredient in the present setting is the derivation of an exact energy-dissipation identity for the thresholding scheme. This identity provides a direct proof of \eqref{eq:energy-ineq-main}, and in particular allows us to bypass the De Giorgi interpolation argument employed in \cite{LauxLelmi} to account for the contribution of the metric slope.

The spectrally truncated version lays the theoretical foundations of a numerical spectral-thresholding method
for~\eqref{eq:HHMHF} complementary to the tangent projection based method
proposed in \cite{Antil}. In the forthcoming work, a detailed numerical analysis and implementation will be presented \cite{KMNR}.

\medskip

Second, we establish a weak--strong uniqueness principle for~\eqref{eq:HHMHF}: 
\begin{theorem}\label{thm:wsu-main}
  Let $u^0 \in H^{1/2}(\mathbb{T}^d;\mathbb{S}^m)$, let $u \in L^\infty(0,T;\,H^{1/2}(\mathbb{T}^d;\mathbb{S}^m))$ with
$\partial_t u \in L^2(\mathbb{T}^d \times (0,T))$ be a weak
  solution of \eqref{eq:HHMHF} satisfying \eqref{eq:energy-ineq-main}, and let $v$
  be a strong solution, i.e., a weak solution satisfying additionally
  \[
    d_{\frac{1}{2}}v,\;d_{\frac{1}{2}}\partial_t v
      \in L^\infty(0,T;\,L^\infty_{\mathrm{od}}),
    \quad
    \partial_t v \in L^\infty(\mathbb{T}^d\times (0,T)),
  \]
  both with initial datum $u^0$.
  Then $u = v$ a.e.\ in $\mathbb{T}^d \times (0,T)$.
\end{theorem}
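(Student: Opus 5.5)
The proof is a relative energy (modulated energy) argument in the spirit of weak--strong uniqueness for the harmonic map heat flow. For the weak solution $u$ and the strong solution $v$ we set
\[
  \mathcal{E}(t) := E(u(t)-v(t)) = \tfrac12\|(-\Delta)^{1/4}(u-v)\|_{L^2}^2 = E(u)+E(v)-\langle (-\Delta)^{1/2}v, u\rangle .
\]
For the energy part we use \eqref{eq:energy-ineq-main} for $u$, and the exact energy identity for $v$; the latter follows by testing the equation for $v$ with $\partial_t v\perp v$, which is legitimate given the assumed regularity. For the cross term we differentiate in time:
\[
  \tfrac{d}{dt}\langle (-\Delta)^{1/2}v,u\rangle = \langle (-\Delta)^{1/2}\partial_t v,u\rangle+\langle (-\Delta)^{1/2}v,\partial_t u\rangle .
\]
Since $u$ is only weak, this is justified by pairing $u$ against the test functions built from $v$, using $\partial_t u\in L^2$ and $d_{\frac12}\partial_t v\in L^\infty_{\mathrm{od}}$. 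The result is a bound for $\mathcal{E}(t)$ plus the relative dissipation $\int_0^t\|\partial_t u-\partial_t v\|^2$ in terms of a remainder that involves the equations only through admissible tangential tests.

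\textbf{Step 1 (tests in the weak formulations).} In the weak formulation for $u$ we use the test function $\Phi = (-\Delta)^{1/2}v - \lambda_v v$ projected onto $u^\perp$, or equivalently $\Phi=-\partial_t v$ projected by $I-u\otimes u$. Symmetrically, in the strong equation for $v$ we use $\partial_t u$ projected onto $v^\perp$. The projections produce error terms of the form $(u\cdot\partial_t v)\,u$ and $(v\cdot \partial_t u)\,v$. These are quadratic in $u-v$ because $|u|=|v|=1$ gives
\[
  u\cdot\partial_t v=(u-v)\cdot\partial_t v,\qquad v\cdot\partial_t u=-(u-v)\cdot\partial_t u ,
\]
and $(u-v)\cdot(u+v)=0$, so $|u-v|^2=2(1-u\cdot v)$. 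The terms involving $\partial_t u$ are absorbed into the relative dissipation via Young's inequality. The $L^\infty$ bound on $\partial_t v$ controls the remaining terms by $\|u-v\|_{L^2}^2$.

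\textbf{Step 2 (Lagrange multiplier terms).} This is the main obstacle. After Step 1 the remainder contains terms such as
\[
  \int \lambda_v\,(1-u\cdot v)\,dx \quad\text{and}\quad \int \lambda_u\,(\dots)\,dx ,
\]
together with the nonlocal cross terms from expanding $\langle (-\Delta)^{1/2}(u-v),u-v\rangle$. We will write everything in the off-diagonal calculus of Section~\ref{sec:prelim}, using the product rule
\[
  d_{\frac12}(fg)(x,y)=d_{\frac12}f(x,y)\,g(x)+f(y)\,d_{\frac12}g(x,y).
\]
The aim is to show that the remainder is bounded by
\[
  C\bigl(\|d_{\frac12}v\|_{L^\infty_{\mathrm{od}}}^2+\|d_{\frac12}\partial_tv\|_{L^\infty_{\mathrm{od}}}+\|\partial_t v\|_{L^\infty}\bigr)\bigl(\mathcal{E}+\|u-v\|_{L^2}^2\bigr) .
\]
The delicate term is $\lambda_u$, since it is only $L^1$. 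We plan to avoid it altogether: pairing $\lambda_u u$ with a vector field orthogonal to $u$ gives zero, so it suffices to test only with tangential projections. The terms that remain are then of the form
\[
  \iint d_{\frac12}(u-v)\cdot d_{\frac12}(u-v)\,(\dots v\dots),
\]
where the factor $(\dots v\dots)$ is bounded in $L^\infty_{\mathrm{od}}$. These are controlled by $\mathcal{E}$ through the Leibniz rule.

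\textbf{Step 3 (closing the argument).} This yields
\[
  \mathcal{E}(t)+\|u-v\|_{L^2}^2(t)\le C\int_0^t\bigl(\mathcal{E}+\|u-v\|_{L^2}^2\bigr)\,ds .
\]
The $L^2$ part is handled directly: since $\partial_t u,\partial_t v\in L^2$, we have $\frac{d}{dt}\|u-v\|^2=2\langle u-v,\partial_t u-\partial_t v\rangle$. At $t=0$ both quantities vanish because $u$ and $v$ share the datum $u^0$, and strong attainment of the datum handles the energy inequality near $t=0$. Gronwall's inequality then gives $\mathcal{E}\equiv0$ and $u=v$ a.e.
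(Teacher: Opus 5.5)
Your framework (relative energy $E(u-v)$, energy inequality for $u$, time derivative of the cross term, Gr\"onwall) is the same as the paper's. However, the step you call the main obstacle is not resolved by the mechanism you propose, and Step~1 overstates what is quadratic.

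Testing with tangential projections cannot remove the multiplier $\lambda_u=|d_{\frac12}u|^2_{\mathrm{od}}$. For $|u|=1$ and scalar $f$ one has $(u(x)-u(y))\cdot(f(x)u(x)-f(y)u(y))=\tfrac12(f(x)+f(y))|u(x)-u(y)|^2$, hence
\[
  \int_{\T^d}\langle d_{\frac12}u,\,d_{\frac12}(fu)\rangle_{\mathrm{od}}\,dx=\int_{\T^d}\lambda_u\,f\,dx .
\]
So testing with $(\mathrm{Id}-u\otimes u)\partial_t v$ reproduces exactly the term $\int\lambda_u\,u\cdot\partial_t v$ that testing with $\partial_t v$ gives; the Leibniz rule only rewrites it.

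Moreover, $u\cdot\partial_t v=w\cdot\partial_t v$ with $w=u-v$ is \emph{linear} in $w$; only the normal part $w\cdot v=-\tfrac12|w|^2$ is quadratic. Hence neither $\int\lambda_u\,w\cdot\partial_t v$ (with $\lambda_u$ merely in $L^1$) nor the term $\int\lambda_v\,w\cdot\partial_t v$ left over after your Young step is bounded by $\|w\|_{L^2}^2$ or by $E(u-v)$ on its own. Your claim that the remaining terms have the form $\iint d_{\frac12}w\cdot d_{\frac12}w\,(\dots v\dots)$ is therefore unsupported.

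The missing idea is to combine the two multiplier terms before estimating,
\[
  \int\lambda_u\,w\cdot\partial_t v-\int\lambda_v\,w\cdot\partial_t u
  =\int(\lambda_u-\lambda_v)\,w\cdot\partial_t v-\int\lambda_v\,w\cdot(\partial_t u-\partial_t v),
\]
and then use $\lambda_u-\lambda_v=|d_{\frac12}w|^2_{\mathrm{od}}+2\langle d_{\frac12}v,d_{\frac12}w\rangle_{\mathrm{od}}$. The $L^1$ piece is tamed by the sphere constraint through the pointwise bound $|w|\le 2$:
\[
  \int_{\T^d}|d_{\frac12}w|^2_{\mathrm{od}}\,|w\cdot\partial_t v|\,dx\le 4\|\partial_t v\|_{L^\infty}E(u-v).
\]
The mixed piece is handled by $\|d_{\frac12}v\|_{L^\infty_{\mathrm{od}}}$ and Cauchy--Schwarz.

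For the last term, the paper writes $w\cdot(\partial_t u-\partial_t v)=\partial_t\tfrac12|w|^2$ and integrates by parts in time. This is where $\partial_t\lambda_v=2\langle d_{\frac12}v,d_{\frac12}\partial_t v\rangle_{\mathrm{od}}$, and hence the hypothesis on $d_{\frac12}\partial_t v$, enters. Your idea of absorbing this term by Young into the relative dissipation $\int_0^t\|\partial_t u-\partial_t v\|_{L^2}^2$ also works; the paper simply discards that dissipation. You must keep it and reserve part of it for the $L^2$ part, which you can then also treat by Young rather than via the weak formulations as the paper does. With this correction your Step~3 closes as stated.
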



Our proof is based on the relative energy concept, as in \cite{FeireislNovotny, EmmrichLasarzik}. The structural parallels, namely, the sphere constraint $|u|=1$, an energy measuring the difference between two solutions, and a Gr\"onwall argument, carry over naturally to the setting of harmonic map heat flows. 
An analogous weak-strong uniqueness statement holds for the classical harmonic map heat flow equation \eqref{eq:HMHF}: if
  $u^0 \in H^1(\mathbb{T}^d;\mathbb{S}^m)$, $u$ is a weak
  solution and $v$ a strong solution with
  \[
    \nabla v,\;\nabla\partial_t v,\;\partial_t v
      \in L^\infty(\mathbb{T}^d\times (0,T)),
  \]
  then $u = v$ a.e.\ in $\mathbb{T}^d \times (0,T)$. The claim
  cannot be directly deduced from existing weak--strong uniqueness results for the Landau--Lifshitz--Gilbert equation~\cite{DumasSueur, DiFratta2020}, which rely essentially on the presence of the precessional term. 
  The argument rather follows the same structure as the proof of
  Theorem~\ref{thm:wsu-main}, with $\nabla$ in place of $d_{\frac{1}{2}}$;
  see Section~\ref{sec:wsu}.
Combining the convergence of the scheme with weak--strong uniqueness yields convergence of the piecewise constant interpolant of the iterates in~\eqref{eq:scheme} to the strong solution, as long as such a solution exists for~\eqref{eq:HHMHF}. Indeed, the weak solution obtained as the limit of the scheme satisfies the energy inequality, while the regularity of the strong solution provides precisely the additional assumptions required by the uniqueness theorem:

\begin{corollary}\label{cor:convergence-to-strong}
  If the assumptions of Theorem~\ref{thm:wsu-main} are satisfied,
  then the limit in Theorem~\ref{thm:main} is the strong solution.
\end{corollary}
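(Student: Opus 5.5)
The corollary follows by combining Theorem~\ref{thm:main} with Theorem~\ref{thm:wsu-main}. The plan is to check that the limit produced by Theorem~\ref{thm:main} satisfies every hypothesis placed on $u$ in Theorem~\ref{thm:wsu-main}, and then to upgrade subsequential convergence to convergence of the whole family.

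\emph{Step 1: the limit is admissible.} Let $u$ be the limit of $u^{h_n}$ given by Theorem~\ref{thm:main}, with initial map $u^0\in H^{1/2}(\T^d;\Ss)$.
\begin{itemize}
\item Theorem~\ref{thm:main} gives $u\in L^\infty(0,T;H^{1/2}(\T^d;\Ss))$. The sphere constraint survives because $|u^{h_n}|=1$ and $u^{h_n}\to u$ strongly in $L^2$, hence a.e. along a further subsequence.
\item The weak convergence $\partial_t^{h_n}(P_{h_n/2}*u^{h_n})\rightharpoonup\partial_t u$ in $L^2(\T^d\times(0,T))$ gives $\partial_t u\in L^2(\T^d\times(0,T))$.
\item Theorem~\ref{thm:main} also provides the weak formulation of \eqref{eq:HHMHF}, the energy-dissipation inequality \eqref{eq:energy-ineq-main}, and strong attainment of $u^0$ in $H^{1/2}$.
\end{itemize}
Together with the assumed strong solution $v$ with the same datum $u^0$, all hypotheses of Theorem~\ref{thm:wsu-main} hold. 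Hence $u=v$ a.e. on $\T^d\times(0,T)$.

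\emph{Step 2: the whole family converges.} Every subsequence of $(u^h)$ has, by Theorem~\ref{thm:main} applied along it, a further subsequence converging in the stated senses. By Step~1 each such limit equals $v$. Since the limit is unique, the standard subsequence argument gives convergence of the entire family $h\to0$ to $v$, in $L^2$ strongly and in the weak senses listed. The same argument covers the spectrally truncated variant under the condition on $hN$. Because the result holds for every $T$ up to the existence time of $v$, it is local in time as well.

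\emph{Main obstacle.} The only delicate point is that the notion of weak solution and the way the initial datum is attained in Theorem~\ref{thm:main} must coincide with those used in Theorem~\ref{thm:wsu-main}. In particular, \eqref{eq:energy-ineq-main} must hold with $E(u^0)$ on the right-hand side. I expect this to be immediate from the formulations, since Theorem~\ref{thm:wsu-main} was stated precisely in terms of \eqref{eq:energy-ineq-main}.
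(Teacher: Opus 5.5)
Your proposal is correct and is essentially the paper's argument. The limit from Theorem~\ref{thm:main} has exactly the regularity, weak formulation, energy-dissipation inequality \eqref{eq:energy-ineq-main} and initial datum required of $u$ in Theorem~\ref{thm:wsu-main}, so weak--strong uniqueness forces $u=v$. Your Step~2 (convergence of the whole family via the subsequence principle) goes beyond what the corollary states; it is valid, but it uses the compactness argument in the proof of Theorem~\ref{thm:main}, which applies along any sequence $h_k\to0$, rather than the theorem's statement, which only asserts one such sequence.
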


The paper is organized as follows.
Section~\ref{sec:prelim} collects notation and preliminary material.
Section~\ref{sec:compactness} defines the thresholding scheme,
establishes compactness, and proves convergence to a weak solution.
Section~\ref{sec:wsu} contains the weak-strong uniqueness argument.

%% file: preliminaries.tex

\subsection{Fourier Series and Poisson kernel}

We work on the flat torus $\mathbb{T}^d = \mathbb{R}^d / (2\pi\mathbb{Z})^d$, where $u \in L^2(\mathbb{T}^d;\mathbb{R}^{m+1})$ admits a Fourier expansion
\[
  u(x) = \frac{1}{(2\pi)^{d/2}} \sum_{k \in \mathbb{Z}^d}
    \widehat{u}(k)\, e^{ik \cdot x}
  \quad \text{where} \quad
  \widehat{u}(k) = \frac{1}{(2\pi)^{d/2}}
    \int_{\mathbb{T}^d} u(x)\, e^{-ik \cdot x}\, dx.
\]

The \emph{Poisson kernel} $P_h$ on $\mathbb{T}^d$ is the convolution
kernel with Fourier coefficients
\[
  \widehat{P_h}(k) = \frac{1}{(2\pi)^{d/2}}\,e^{-h|k|},
  \qquad k \in \mathbb{Z}^d,
\]
i.e.\ $P_h$ is the kernel of the Poisson semigroup
$e^{-h(-\Delta)^{1/2}}$, so that
$\widehat{P_h * u}(k) = e^{-h|k|}\,\widehat{u}(k)$
and $\int_{\T^d} P_h\,dx = 1$.
In physical space, the Poisson kernel on $\mathbb{R}^d$ is
\[
  p_h(x) = c_d \frac{h}{(h^2 + |x|^2)^{(d+1)/2}},
  \qquad c_d = \frac{\Gamma(\frac{d+1}{2})}{\pi^{(d+1)/2}},
\]
and $P_h$ is its periodization,
$P_h(x) = \sum_{n \in \mathbb{Z}^d} p_h(x - 2\pi n)$.

The \emph{truncated Poisson kernel} $P_h^N$ is defined by
\[
  \widehat{P_h^N}(k) = \frac{1}{(2\pi)^{d/2}}\,
    e^{-h|k|}\,\mathbf{1}_{\{|k| \leq N\}},
  \qquad k \in \mathbb{Z}^d.
\]
The untruncated kernel is the limiting case $P_h = \lim_{N\to\infty} P_h^N$ (uniformly).
In particular, $P_h^N$ need not be non-negative. However,
under the growth condition on $N$ required in our main results,
positivity is guaranteed.

\begin{lemma}[Positivity of $P_h^N$]\label{lem:PhN_positivity}
For $h$ and $N = N(h)$ with $hN - d(d+1)\log(1/h) \to \infty$ as
$h \to 0$, we have $P_h^N(x) > 0$ for all $x \in \T^d$, provided $h$ is
small enough.
\end{lemma}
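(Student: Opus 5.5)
We compare $P_h^N$ with the untruncated kernel $P_h$ through the splitting
\[
P_h^N(x) = P_h(x) - R_h^N(x), \qquad R_h^N(x) = \frac{1}{(2\pi)^d}\sum_{|k|>N} e^{-h|k|} e^{ik\cdot x}.
\]
It then suffices to show $\|R_h^N\|_{L^\infty} < \min_{\T^d} P_h$ for $h$ small.

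\textbf{Lower bound for $P_h$.} Since $P_h$ is the periodization of the positive function $p_h$, we may keep only the $n=0$ term. For $x$ in the fundamental cell $[-\pi,\pi]^d$ we have $|x|\le \sqrt d\,\pi$, so
\[
P_h(x) \ge p_h(x) \ge c_d\, \frac{h}{(h^2+d\pi^2)^{(d+1)/2}} \ge C_d^{-1} h
\]
for $h\le 1$. Hence $\min P_h \gtrsim h$, with a constant depending only on $d$.

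\textbf{Upper bound for the tail.} We estimate crudely:
\[
\|R_h^N\|_\infty \le (2\pi)^{-d}\sum_{|k|>N} e^{-h|k|}.
\]
The number of lattice points with $j<|k|\le j+1$ is $\lesssim (j+1)^{d-1}$, so
\[
\sum_{|k|>N} e^{-h|k|} \lesssim \sum_{j\ge N} (j+1)^{d-1}e^{-hj} \lesssim \int_{N-1}^\infty (r+2)^{d-1} e^{-hr}\,dr .
\]
Substituting $s=hr$ bounds this by $h^{-d}\int_{hN-h}^\infty (s+2h)^{d-1}e^{-s}\,ds$. Since $hN\to\infty$ (which follows from the hypothesis), this is $\lesssim h^{-d}(hN)^{d-1}e^{-hN}$. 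Indeed, for large $a$ the incomplete Gamma integral satisfies $\int_a^\infty s^{d-1}e^{-s}ds\lesssim a^{d-1}e^{-a}$.

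\textbf{Comparison, the main step.} We need
\[
h^{-d}(hN)^{d-1}e^{-hN} \ll h, \quad\text{i.e.}\quad hN - (d-1)\log(hN) - (d+1)\log(1/h)\to\infty .
\]
Write $hN = d(d+1)\log(1/h) + M$ with $M\to\infty$. Then $hN \ge M$, and also $hN\gtrsim \log(1/h)$. The term $(d-1)\log(hN)$ is absorbed for large $hN$, since $\log(hN)\le \epsilon\, hN + C_\epsilon$. The left side is therefore at least
\[
(1-\epsilon)\,hN - (d+1)\log(1/h) - C \ge (1-\epsilon)M + \bigl((1-\epsilon)d(d+1)-(d+1)\bigr)\log(1/h) - C \to\infty
\]
for $d\ge1$ and $\epsilon$ small. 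For $d=1$ one instead notes that the $\log(hN)$ factor is absent, because $d-1=0$.

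So the stated condition $hN - d(d+1)\log(1/h)\to\infty$ is more than sufficient; the only care needed is tracking the polynomial prefactor and lattice-point counting uniformly in $d$. The positivity $P_h^N(x)\ge P_h(x)-\|R_h^N\|_\infty>0$ for all $x$ then follows for $h$ small.
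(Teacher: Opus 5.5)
Your proof is correct. Its skeleton is the same as the paper's: split $P_h^N = P_h - R_h^N$, bound $\min_{\mathbb{T}^d} P_h \gtrsim h$ via the positive free-space kernel (you justify this; the paper only asserts it), and bound $\|R_h^N\|_{L^\infty}$ by $(2\pi)^{-d}\sum_{|k|>N} e^{-h|k|}$. The difference lies in how you estimate that tail sum.

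The paper dominates the complement of the Euclidean ball by the complement of the cube $\{|k|_\infty \le N/\sqrt d\}$, and $e^{-h|k|}$ by $e^{-(h/\sqrt d)|k|_1}$. The sum then factorizes into explicit one-dimensional geometric series and gives $C_d\, h^{-d} e^{-hN/d}$. You instead count lattice points in spherical shells and compare with an incomplete Gamma integral, obtaining $\lesssim h^{-d}(hN)^{d-1}e^{-hN}$.

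The paper's route is fully elementary and closed-form. It needs no lattice-point count (which you should justify, e.g.\ by comparing the unit cubes centred at the lattice points with the volume of a slightly enlarged annulus) and no Gamma asymptotics. Its hypothesis is also tailored exactly to its bound: writing $hN = d(d+1)\log(1/h) + M$ gives $h^{-d}e^{-hN/d} = h\,e^{-M/d}$ in one line.

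Your route loses only a polynomial factor in $hN$ rather than a factor $d$ in the exponent. It therefore shows that the $d(d+1)$ in the hypothesis is an artifact of the norm comparisons. Already $hN - (d+1)\log(1/h) - (d-1)\log(hN) \to \infty$ suffices, for instance $hN \ge (d+1+\delta)\log(1/h)$ with a fixed $\delta > 0$.

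The price is the somewhat fiddly absorption of $(d-1)\log(hN)$. Your displayed lower bound with $\epsilon > 0$ only works for $d \ge 2$, since for $d=1$ the coefficient of $\log(1/h)$ becomes $-2\epsilon$. So the separate remark you make for $d=1$ is genuinely needed; in that case your required condition and the paper's coincide.
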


\begin{proof}
Since $P_h$ is the periodization of the positive free-space
kernel $p_h$, we have
$\min_{\T^d} P_h \geq m_d\,h$
for a constant $m_d > 0$ depending only on $d$.

For the truncation error, since
$|k|_\infty \leq |k| \leq \sqrt{d}\,|k|_\infty$,
we have
$\{k : |k| > N\} \subset \{k : |k|_\infty > N/\sqrt{d}\}$
and $e^{-h|k|} \leq e^{-(h/\sqrt{d})\,|k|_1}$, so
\[
  \sum_{|k| > N} e^{-h|k|}
  \;\leq\; \sum_{|k|_\infty > N/\sqrt{d}}
  e^{-(h/\sqrt{d})\,|k|_1}
  \;\leq\; S^d - S_N^d,
\]
where, writing $\tilde{h} := h/\sqrt{d}$,
\[
  S \;:=\; 1 + \frac{2e^{-\tilde{h}}}{1-e^{-\tilde{h}}},
  \qquad
  S_N \;:=\; 1 + \frac{2e^{-\tilde{h}}
  \bigl(1-e^{-\tilde{h}(N/\sqrt{d}-1)}\bigr)}{1-e^{-\tilde{h}}}
\]
are evaluated geometric series, using
$\lfloor N/\sqrt{d}\rfloor \geq N/\sqrt{d}-1$ for the integer cutoff of the
partial sum in $S_N$. Their difference is
\[
  S - S_N
  \;=\; \frac{2\,e^{-\tilde{h}N/\sqrt{d}}}{1-e^{-\tilde{h}}}
  \;=\; \frac{2\,e^{-hN/d}}{1-e^{-\tilde{h}}}.
\]
By $a^d - b^d \leq d\,a^{d-1}(a-b)$ for $0 \leq b \leq a$,
\[
  S^d - S_N^d
  \;\leq\; d\,S^{d-1}(S - S_N)
  \;\leq\; \frac{C_d}{h^d}\,e^{-hN/d},
\]
using $S \leq C/h$ and $1/(1-e^{-\tilde{h}}) \leq C/h$
for $h \leq 1$.
Since $hN - d(d+1)\log(1/h) \to \infty$,
the right-hand side is $o(h)$, hence
\[
  \|P_h - P_h^N\|_{L^\infty}
  \;\leq\; \frac{1}{(2\pi)^d}\sum_{|k|>N} e^{-h|k|}
  \;\leq\; \frac{C_d}{h^d}\,e^{-hN/d}
  \;\leq\; \tfrac{1}{2}\,m_d\,h
\]
for $h$ sufficiently small,
and $P_h^N \geq \tfrac{1}{2}\min P_h > 0$.
\end{proof}

\subsection{Fractional gradient and fractional Laplacian}

A suitable representation of the geometric nonlinearity arising from the half-Laplacian has been found in
\cite{MillotSire2015} which can be expressed in terms of the fractional calculus framework developed in \cite{Mazowiecka2018}, and which we adopt in our analysis. For a function $u : \mathbb{T}^d \to \mathbb{R}^{m+1}$, the
\emph{fractional gradient} $d_{\frac{1}{2}} u : \mathbb{T}^d \times \mathbb{R}^d
\to \mathbb{R}^{m+1}$ is defined by
\[
  d_{\frac{1}{2}} u(x,y) := \frac{\sqrt{c_d}}{\sqrt{2}}
    \frac{u(x) - u(y)}{|x - y|^{1/2}},
\]
where $c_d$ is as above. For
$f, g : \mathbb{T}^d \times \mathbb{R}^d \to \mathbb{R}^{m+1}$ we use the
\emph{off-diagonal product} and \emph{off-diagonal norm}
\[
  \langle f, g \rangle_{\mathrm{od}}(x)
  := \int_{\mathbb{R}^d} f(x,y) \cdot g(x,y)\, \frac{dy}{|x-y|^d},
  \qquad
  |f|_{\mathrm{od}}(x) := \sqrt{\langle f, f \rangle_{\mathrm{od}}(x)},
\]
both being functions of $x \in \mathbb{T}^d$ alone. For
$1 \leq p \leq \infty$ we set
\[
  \|f\|_{L^p_{\mathrm{od}}}
  := \bigl\|\,|f|_{\mathrm{od}}\,\bigr\|_{L^p(\mathbb{T}^d)},
  \qquad
  L^p_{\mathrm{od}}(\mathbb{T}^d;\mathbb{R}^{m+1})
  := \bigl\{ f : \|f\|_{L^p_{\mathrm{od}}} < \infty \bigr\} .
\]
For $p = 2$ this norm comes from the inner product
\[
  \langle f, g\rangle_{L^2_{\mathrm{od}}}
  := \int_{\mathbb{T}^d} \langle f, g\rangle_{\mathrm{od}}\,dx .
\]
In particular, $d_{\frac{1}{2}} u \in L^2_{\mathrm{od}}(\mathbb{T}^d)$ whenever
$u \in H^{1/2}(\mathbb{T}^d)$.
Note that $d_{\frac{1}{2}} u$ is antisymmetric: $d_{\frac{1}{2}} u(x,y) = -d_{\frac{1}{2}} u(y,x)$.
We have the integration by parts formula
\[
  \int_{\mathbb{T}^d} \langle d_{\frac{1}{2}} u, d_{\frac{1}{2}} v
    \rangle_{\mathrm{od}}\, dx
  = \int_{\mathbb{T}^d} (-\Delta)^{1/2} u \cdot v\, dx,
\]
where $(-\Delta)^{1/2}$ is the fractional Laplacian, defined as
the Fourier multiplier with symbol $|k|$.

A key structural property of half-harmonic maps is the equivalence
between the weak Euler--Lagrange equation and a nonlocal conservation
law.

\begin{theorem}[{\cite[Lemma 3.1]{Mazowiecka2018}}]
\label{thm:conservation_law}
Let $u \in H^{1/2}(\mathbb{T}^d;\mathbb{S}^m)$. Then the following
are equivalent:
\begin{enumerate}
  \item[\textup{(i)}] \emph{(Weak Euler--Lagrange equation.)}
    For every $\varphi \in H^{1/2} \cap L^\infty(\mathbb{T}^d;
    \mathbb{R}^{m+1})$,
    \[
      \int_{\mathbb{T}^d} \langle d_{\frac{1}{2}} u, d_{\frac{1}{2}} \varphi
        \rangle_{\mathrm{od}}\, dx
      = \int_{\mathbb{T}^d} |d_{\frac{1}{2}} u|^2_{\mathrm{od}}\,
        u \cdot \varphi\, dx.
    \]
  \item[\textup{(ii)}] \emph{(Nonlocal conservation law.)}
    For every $\zeta \in H^{1/2}(\mathbb{T}^d)$ and all $i,j \in \{1,\dots,m{+}1\}$,
    \[
      \int_{\mathbb{T}^d} \bigl(
        \langle d_{\frac{1}{2}} u_i, d_{\frac{1}{2}} \zeta
          \rangle_{\mathrm{od}}\, u_j
        - \langle d_{\frac{1}{2}} u_j, d_{\frac{1}{2}} \zeta
          \rangle_{\mathrm{od}}\, u_i
      \bigr)\, dx = 0.
    \]
\end{enumerate}
\end{theorem}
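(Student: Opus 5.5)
The plan is to prove (i)$\Leftrightarrow$(ii) by inserting into (i) test fields that are infinitesimal rotations of $u$, namely $\varphi = \zeta\,(u_j e_i - u_i e_j)$. Such fields are pointwise orthogonal to $u$, so the right-hand side of (i) drops out. For the reverse direction, every test field is split into a multiple of $u$ plus a combination of such rotation fields, using the constraint $|u|=1$.

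\emph{Admissibility and (i)$\Rightarrow$(ii).} For $\zeta \in H^{1/2}\cap L^\infty$ the field $\varphi$ belongs to $H^{1/2}\cap L^\infty$, because products of bounded $H^{1/2}$ functions are again $H^{1/2}$. One sees this from
\[
|fg(x)-fg(y)|\le \|f\|_\infty|g(x)-g(y)|+\|g\|_\infty|f(x)-f(y)|.
\]
Since $u\cdot\varphi = \zeta(u_iu_j-u_ju_i)=0$, identity (i) reduces to $\int\langle d_{\frac12}u, d_{\frac12}\varphi\rangle_{\mathrm{od}}=0$. To evaluate this, use the discrete Leibniz rule
\[
(\zeta u_j)(x)-(\zeta u_j)(y) = u_j(x)\bigl(\zeta(x)-\zeta(y)\bigr) + \zeta(y)\bigl(u_j(x)-u_j(y)\bigr),
\]
and the analogous formula for $\zeta u_i$. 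Substituting gives the desired terms $\langle d_{\frac12}u_i,d_{\frac12}\zeta\rangle_{\mathrm{od}}u_j-\langle d_{\frac12}u_j,d_{\frac12}\zeta\rangle_{\mathrm{od}}u_i$. The remainder is
\[
\zeta(y)\bigl[(u_i(x)-u_i(y))(u_j(x)-u_j(y))-(u_j(x)-u_j(y))(u_i(x)-u_i(y))\bigr]|x-y|^{-d-1},
\]
which vanishes identically. This is the key algebraic cancellation, and it is why no symmetrization is needed. It gives (ii) for bounded $\zeta$. To pass to general $\zeta\in H^{1/2}$, truncate $\zeta$ and use dominated convergence. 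The integrand is dominated by $|d_{\frac12}u|\,|d_{\frac12}\zeta|$ integrated against $dy\,dx/|x-y|^d$, which is integrable, and $|u|\le1$ controls the remaining factors.

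\emph{(ii)$\Rightarrow$(i).} Take $\varphi\in H^{1/2}\cap L^\infty$. Using $|u|=1$, write
\[
\varphi = (u\cdot\varphi)\,u + \sum_{i,j} \tfrac12 \varphi^{ij}(u_j e_i - u_i e_j), \qquad \varphi^{ij}:=\varphi_i u_j-\varphi_j u_i .
\]
Checking this decomposition: the $e_i$-component of the sum is $\sum_j(\varphi_iu_j-\varphi_ju_i)u_j=\varphi_i-u_i(u\cdot\varphi)$. Both $\varphi^{ij}$ and $\psi:=u\cdot\varphi$ lie in $H^{1/2}\cap L^\infty$. By the computation above, read in reverse, (ii) with $\zeta=\varphi^{ij}$ yields $\int\langle d_{\frac12}u,d_{\frac12}(\varphi^{ij}(u_je_i-u_ie_j))\rangle_{\mathrm{od}}=0$. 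It therefore remains to show
\[
\int\langle d_{\frac12}u, d_{\frac12}(\psi u)\rangle_{\mathrm{od}} = \int|d_{\frac12}u|^2_{\mathrm{od}}\,\psi .
\]
The Leibniz rule gives $(\psi u)(x)-(\psi u)(y)=\psi(x)(u(x)-u(y))+u(y)(\psi(x)-\psi(y))$. The first term produces exactly $|d_{\frac12}u|^2_{\mathrm{od}}\psi$. For the second term, the identity $|u|=1$ gives
\[
(u(x)-u(y))\cdot u(y) = -\tfrac12|u(x)-u(y)|^2 .
\]
The leftover is thus $-\tfrac12\iint|u(x)-u(y)|^2(\psi(x)-\psi(y))|x-y|^{-d-1}$. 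This integral vanishes by antisymmetry under $x\leftrightarrow y$, once one interprets the $\mathbb{T}^d\times\mathbb{R}^d$ integration via periodicity as a symmetric integral over $\mathbb{T}^d\times\mathbb{T}^d$ with the periodized kernel.

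\emph{Main obstacle.} I expect the hardest part to be the justification of the last symmetrization step and the absolute integrability of each split term, rather than the algebra. The term $|u(x)-u(y)|^2|\psi(x)-\psi(y)|\,|x-y|^{-d-1}$ is integrable because $\psi$ is bounded and $u\in H^{1/2}$. Rewriting the integral over $\mathbb{R}^d$ in $y$ as a periodized kernel, $\sum_n|x-y-2\pi n|^{-d-1}$, is legitimate by Fubini. Once those two facts are in place, the argument needs only these pointwise identities.
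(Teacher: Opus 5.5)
The paper gives no proof of this statement; it imports it directly from \cite[Lemma 3.1]{Mazowiecka2018}, so there is no in-paper argument to compare against. Your argument is correct and complete.

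In (i)$\Rightarrow$(ii), the $\zeta(y)$-terms produced by the discrete Leibniz rule cancel pointwise. So (ii) holds for bounded $\zeta$, and it extends to all $\zeta\in H^{1/2}$ by truncation (which is $1$-Lipschitz) and dominated convergence.

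In (ii)$\Rightarrow$(i), the splitting $\varphi=(u\cdot\varphi)u+\sum_{i,j}\tfrac12\varphi^{ij}(u_je_i-u_ie_j)$ reduces everything to the normal part. There, $(u(x)-u(y))\cdot u(y)=-\tfrac12|u(x)-u(y)|^2$ leaves an integral that is absolutely convergent (since $\psi\in L^\infty$) and odd under $x\leftrightarrow y$.

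This is the nonlocal counterpart of Shatah's conservation-law argument for harmonic maps into spheres, i.e.\ the standard route. What your write-up adds over the bare citation is that it is carried out directly in this paper's periodic off-diagonal setting, with integration over $\mathbb{T}^d\times\mathbb{R}^d$. Periodicity enters only in the final antisymmetry step, and you justify that step correctly by rewriting the integral with the periodized kernel $\sum_{n\in\mathbb{Z}^d}|x-y-2\pi n|^{-d-1}$ via Fubini.
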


The same antisymmetric structure extends to the half-harmonic map
heat flow, and again in both directions.

\begin{theorem}
\label{thm:conservation_law_flow}
Let $u \in L^\infty(0,T;\,H^{1/2}(\mathbb{T}^d;\mathbb{S}^m))$ with
$\partial_t u \in L^2(\mathbb{T}^d\times(0,T))$. Then $u$ satisfies 
the weak formulation of \eqref{eq:HHMHF}
\begin{equation}\label{eq:weak-hhmhf-main}
  \int_0^T \!\int_{\mathbb{T}^d} \partial_t u \cdot \varphi\,dx\,dt
  \;+\;
  \int_0^T \!\int_{\mathbb{T}^d}
    \langle d_{\frac{1}{2}} u,\, d_{\frac{1}{2}} \varphi \rangle_{\mathrm{od}}\,dx\,dt
  \;=\;
  \int_0^T \!\int_{\mathbb{T}^d}
    |d_{\frac{1}{2}} u|^2_{\mathrm{od}}\,u \cdot \varphi\,dx\,dt
\end{equation}
for all $\varphi \in C^\infty(\mathbb{T}^d \times [0,T];\,\R^{m+1})$
if and only if 
\[
  \int_0^T \!\int_{\mathbb{T}^d} \bigl(
    (u_j\,\partial_t u_i - u_i\,\partial_t u_j)\,\zeta
    + \langle d_{\frac{1}{2}} u_i, d_{\frac{1}{2}} \zeta
      \rangle_{\mathrm{od}}\, u_j
    - \langle d_{\frac{1}{2}} u_j, d_{\frac{1}{2}} \zeta
      \rangle_{\mathrm{od}}\, u_i
  \bigr)\, dx\, dt = 0
\]
for every
$\zeta \in C_c^\infty(\mathbb{T}^d \times (0,T))$
and all $i,j \in \{1,\dots,m{+}1\}$.
\end{theorem}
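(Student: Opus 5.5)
We prove the two implications separately, following the static argument of Theorem~\ref{thm:conservation_law} (\cite[Lemma 3.1]{Mazowiecka2018}) with the time derivative added as an extra term. Everything rests on one identity. For $\psi$ a product of $u_k$ with a smooth $\zeta$, the fractional Leibniz rule gives
\[
d_{\frac12}(u_j\zeta)(x,y)=u_j(x)\,d_{\frac12}\zeta(x,y)+\zeta(y)\,d_{\frac12}u_j(x,y).
\]
The estimates we need are standard. The quantity $|d_{\frac12}\zeta|_{\mathrm{od}}$ is bounded for smooth $\zeta$ on the torus, since the kernel $|x-y|^{-d-1}|\zeta(x)-\zeta(y)|^2$ is integrable both near the diagonal and at infinity. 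Moreover $u\in L^\infty\cap H^{1/2}$ implies $u_j\zeta\in H^{1/2}\cap L^\infty$ for a.e.\ $t$.

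\emph{Weak formulation $\Rightarrow$ conservation law.} We first extend \eqref{eq:weak-hhmhf-main} by density to test functions $\varphi\in L^2(0,T;H^{1/2})\cap L^\infty$. Every term is continuous under the relevant convergence: $\partial_t u\in L^2$; $d_{\frac12}u\in L^\infty_tL^2_{\mathrm{od}}$; and $|d_{\frac12}u|_{\mathrm{od}}^2\in L^\infty_tL^1$, paired with $\varphi$ uniformly bounded, where we mollify $\varphi$ and use dominated convergence. We then take $\varphi=\zeta u_j e_i-\zeta u_i e_j$. Because this $\varphi$ is orthogonal to $u$ pointwise, the right-hand side vanishes. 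The time term becomes $\int(u_j\partial_tu_i-u_i\partial_tu_j)\zeta$. Expanding $d_{\frac12}(\zeta u_j)$ with the Leibniz rule, the contribution $\zeta(y)\langle d_{\frac12}u_i,d_{\frac12}u_j\rangle$ is symmetric in $i,j$ and cancels in the antisymmetric combination. What survives is exactly $\langle d_{\frac12}u_i,d_{\frac12}\zeta\rangle_{\mathrm{od}}u_j-\langle d_{\frac12}u_j,d_{\frac12}\zeta\rangle_{\mathrm{od}}u_i$. To make this cancellation legitimate we use the fact that the pointwise products are integrable in $(x,y,t)$, which justifies splitting the off-diagonal integral.

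\emph{Conservation law $\Rightarrow$ weak formulation.} Given $\varphi$, we split it into its normal and tangential parts, $\varphi=(u\cdot\varphi)u+\sum_{i,j}\frac12(\varphi_iu_j-\varphi_ju_i)(u_je_i-u_ie_j)$ with $|u|=1$, or more simply write $\varphi_i=\sum_j u_j(u_j\varphi_i-u_i\varphi_j)+u_i(u\cdot\varphi)$.

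The normal part is handled by differentiating the constraint. From $|u|^2=1$ we get $u\cdot\partial_tu=0$. We also have the identity $\sum_k u_k(x)\,d_{\frac12}u_k(x,y)=\tfrac12\tfrac{\sqrt{c_d}}{\sqrt2}|u(x)-u(y)|^2/|x-y|^{1/2}$, which follows from $u(x)\cdot(u(x)-u(y))=\tfrac12|u(x)-u(y)|^2$. Together these show that the test function $(u\cdot\varphi)u$ produces exactly the right-hand side $|d_{\frac12}u|^2_{\mathrm{od}}u\cdot\varphi$, again via the Leibniz rule.

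The tangential part is handled with the conservation law. We apply it with $\zeta$ replaced by $\zeta_{ij}=u_j\varphi_i-u_i\varphi_j$ (really with the sum over $j$ of the appropriate weights). This requires first extending the conservation law from $C_c^\infty$ to $\zeta\in L^2_tH^{1/2}\cap L^\infty$ by density, which we can do as in the first step. We also need compact support in time, which we remove by approximating with cutoffs $\chi_\varepsilon(t)$; the boundary terms carry no time derivative of $\zeta$, so they vanish in the limit. Reassembling the pieces with the Leibniz rule and the same symmetric cancellation then yields \eqref{eq:weak-hhmhf-main}.

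\emph{Main obstacle.} The hardest step is the bookkeeping in the converse direction. There, the term $\zeta(y)\,d_{\frac12}u_k$ is evaluated at the shifted point $y$, so the resulting cross terms pair $u(x)$ against $u(y)$ rather than cancelling trivially. These terms must be recombined using $|u|=1$ at both points, through the identity $u(y)\cdot(u(x)-u(y))=-\tfrac12|u(x)-u(y)|^2$. In addition, every term must be shown to be absolutely integrable so that Fubini applies. I would organize this by writing everything in terms of the bilinear form $\iint (a(x)-a(y))\cdot(b(x)-b(y))|x-y|^{-d-1}$ and checking the algebraic identity pointwise in $(x,y)$.
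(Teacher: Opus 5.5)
The paper states this theorem in Section~\ref{sec:prelim} without proof, as the evolutionary counterpart of Theorem~\ref{thm:conservation_law} (\cite[Lemma 3.1]{Mazowiecka2018}). Your proposal is a correct proof along the lines that adaptation suggests: extend the test class by density, take $\varphi=\zeta u_j e_i-\zeta u_i e_j$ and use the symmetric cancellation, then split a general $\varphi$ into normal and tangential parts.

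The bookkeeping you single out as the main obstacle is milder than you fear. The $u(x)$-versus-$u(y)$ pairing arises in the normal part, not the tangential one.

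\emph{Tangential part.} Write $\varphi^T_i=\sum_j u_j\zeta_{ij}$ with $\zeta_{ij}=u_j\varphi_i-u_i\varphi_j$, and do not expand $\zeta_{ij}$. Use the ordering $d_{\frac{1}{2}}(u_j\zeta_{ij})(x,y)=u_j(x)\,d_{\frac{1}{2}}\zeta_{ij}(x,y)+\zeta_{ij}(y)\,d_{\frac{1}{2}}u_j(x,y)$. Then the terms $\zeta_{ij}(y)\,d_{\frac{1}{2}}u_i\,d_{\frac{1}{2}}u_j$ cancel pointwise after summing over $(i,j)$. Consequently $\tfrac12\sum_{i,j}$ of the conservation law at $\zeta_{ij}$ is literally $\int_0^T\!\int_{\mathbb{T}^d}(\partial_t u\cdot\varphi^T+\langle d_{\frac{1}{2}}u,d_{\frac{1}{2}}\varphi^T\rangle_{\mathrm{od}})\,dx\,dt$. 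In addition $u\cdot\varphi^T=0$, so the right-hand side term vanishes.

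\emph{Normal part.} With $\psi=u\cdot\varphi$, the needed identity is not pointwise in $x$. One has $(u(x)-u(y))\cdot(\psi(x)u(x)-\psi(y)u(y))=\tfrac12(\psi(x)+\psi(y))\,|u(x)-u(y)|^2$. To replace $\tfrac12(\psi(x)+\psi(y))$ by $\psi(x)$, you need that $\int_{\mathbb{T}^d}\int_{\R^d}F(x,y)\,dy\,dx$ is invariant under $x\leftrightarrow y$. This holds for absolutely integrable $F$ with $F(x+2\pi n,y+2\pi n)=F(x,y)$ for $n\in\mathbb{Z}^d$, by decomposing $\R^d$ into translated fundamental cells. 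You should state this symmetrization explicitly.

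\emph{Extension of the conservation law.} This is easier than you indicate. Its left-hand side is continuous on $L^2(0,T;H^{1/2}(\mathbb{T}^d))$, by Cauchy--Schwarz in the off-diagonal product together with $|u|=1$ and $\partial_t u\in L^2$. So no $L^\infty$ control of $\zeta$ and no care with the time cutoff is needed.
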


\subsection{Approximative energy and approximative half-Laplacian}

The analysis of the thresholding scheme relies on the approximative energy
$E_h(u)$ in \eqref{eq:aprox_energy}
and its truncated variant
\[
  E_h^N(u) := \frac{1}{2h} \int_{\mathbb{T}^d}
    \bigl(1 - u(x) \cdot (P_h^N * u)(x)\bigr)\, dx,
\]
respectively.
Both can be viewed as discretizations of the energy $E$. Since $P_h^N$ has fewer Fourier modes than $P_h$,
we have $E_h(u) \leq E_h^N(u)$; see \eqref{eq:EmN-identity} below
for the exact difference. It readily follows (cf. \cite{LauxYip2019}) that the following result holds.

\begin{lemma}\label{lem:Eh_limit}
For any measurable $u : \mathbb{T}^d \to \mathbb{S}^m$,
\[
  E_h(u) \leq E(u)
  \qquad \text{and} \qquad
  \lim_{h \to 0} E_h(u) = E(u).
\]
Moreover, $E_h$ admits the representation
\[
  E_h(u) = \frac{1}{4h} \int_{\mathbb{T}^d} \int_{\mathbb{T}^d}
    P_h(x-y)\,|u(x) - u(y)|^2\, dy\, dx.
\]
\end{lemma}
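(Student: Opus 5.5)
The plan is to prove the representation formula first and then read off both claims from the Fourier side.

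\emph{Representation formula.} Since $|u|=1$ pointwise and $\int_{\T^d}P_h=1$, we have
\[
\int_{\T^d}\!\int_{\T^d} P_h(x-y)\,|u(x)-u(y)|^2\,dy\,dx = 2\int_{\T^d}\bigl(1-u\cdot P_h*u\bigr)\,dx .
\]
To see this, expand $|u(x)-u(y)|^2=2-2u(x)\cdot u(y)$ and integrate, using the symmetry $P_h(z)=P_h(-z)$. Dividing by $4h$ gives the representation. Here $u$ is only measurable, but it is bounded, so all integrals are finite.

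\emph{Fourier expression.} By Parseval and the normalization of the coefficients,
\[
\int_{\T^d} u\cdot P_h*u\,dx=\sum_k e^{-h|k|}|\widehat u(k)|^2,
\qquad
\int_{\T^d} 1\,dx=\int_{\T^d}|u|^2\,dx=\sum_k|\widehat u(k)|^2 .
\]
Combining these,
\[
E_h(u)=\frac{1}{2}\sum_{k\in\Z^d}\frac{1-e^{-h|k|}}{h}\,|\widehat u(k)|^2,
\qquad
E(u)=\frac12\sum_k |k|\,|\widehat u(k)|^2 .
\]
In the $E(u)$ formula, the constant $c_d$ was chosen precisely so that it equals $\tfrac12\|(-\Delta)^{1/4}u\|_{L^2}^2$, as stated in the introduction; this holds whether or not the sum is finite, with the value $+\infty$ allowed.

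\emph{Both claims.} The elementary inequality $1-e^{-s}\le s$ for $s\ge0$ gives termwise $\frac{1-e^{-h|k|}}{h}\le |k|$, so $E_h(u)\le E(u)$. Moreover $\frac{1-e^{-h|k|}}{h}\uparrow |k|$ as $h\downarrow0$, because $s\mapsto(1-e^{-s})/s$ is decreasing. Monotone convergence for series then gives $E_h(u)\to E(u)$, including the case $E(u)=\infty$.

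\emph{Main obstacle.} The only delicate point is identifying the double-integral definition of $E$ with the Fourier sum, that is, the periodization of $\R^d$ against $\T^d$. I would take this identity as given from the introduction. If a check is wanted, write
\[
|x-y|^{-(d+1)}=\frac{1}{c_d}\lim_{h\to0}\frac{p_h(x-y)}{h},
\]
or more precisely $\frac{1}{\Gamma(\cdot)}\int_0^\infty p_h\,\frac{dh}{h^2}$-type subordination. Alternatively, observe that $\frac{1}{4h}\int\!\int p_h(x-y)|u(x)-u(y)|^2$ increases to $E(u)$ by monotone convergence, since $p_h(z)/h\uparrow c_d|z|^{-(d+1)}$. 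Then periodizing $p_h$ into $P_h$ reproduces $E_h$.

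This second route is actually self-contained: it proves $E_h\le E$ and the convergence directly from the kernel representation, without Fourier analysis. I would present it as the main argument and keep the Fourier computation as a consistency check.
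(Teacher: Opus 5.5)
Your proposal is correct. Its Fourier half is the route the paper has in mind. The paper gives no separate proof and refers to Laux--Yip, but the proof of Lemma~\ref{lem:EhN_limit} estimates the low-mode part ``exactly as for $E_h$'' via Plancherel and the bound $\frac{1-e^{-h|k|}}{h}\le |k|$. Likewise, Lemma~\ref{lem:Eh-def-truncated} is your representation argument with $P_h^N$ in place of $P_h$.

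The argument you propose to make primary is genuinely different. Tonelli and periodicity of $u$ give $E_h(u)=\frac{1}{4h}\int_{\T^d}\int_{\R^d}p_h(x-y)\,|u(x)-u(y)|^2\,dy\,dx$. Since $p_h(z)/h=c_d\,(h^2+|z|^2)^{-(d+1)/2}$ increases pointwise to $c_d|z|^{-(d+1)}$ as $h\downarrow 0$, both claims follow by pointwise comparison and monotone convergence against the defining double integral of $E$.

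This route does not need the identification $E(u)=\frac12\sum_k|k|\,|\widehat u(k)|^2$, which the Fourier route takes as input. In fact, $E_h(u)$ is then a monotone limit both of the spectral sums and of the kernel integrals, so your two computations together prove that identification in $[0,\infty]$. The ``main obstacle'' you flag therefore disappears.

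The price is that the kernel route depends on the explicit, $h$-monotone form of the free-space Poisson kernel. It does not carry over to $P_h^N$, which is not a periodization of such a family and need not even be nonnegative. The spectral formula for $E_h$, by contrast, extends verbatim to the truncated scheme. The paper also reuses it later, in Lemma~\ref{lem:EmN-identity} and in the $I_1$ estimate of Theorem~\ref{thm:limit_antisym}.

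One small correction: drop the subordination aside. As written, $\int_0^\infty p_h(z)\,h^{-2}\,dh$ diverges logarithmically at $h=0$, and the pointwise monotone limit of $p_h/h$ is all you need.
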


The \emph{approximative half-Laplacian} is defined as
\[
  (-\Delta)^{1/2}_h u(x) := \frac{u(x) - (P_h * u)(x)}{h},
\]
the difference quotient of the Poisson semigroup, with Fourier
multiplier $\frac{1-e^{-h|k|}}{h}$.
Since $P_h$ is even, symmetrising the convolution gives
\[
  (-\Delta)^{1/2}_h u(x)
  = -\frac{1}{2h} \int_{\mathbb{T}^d} P_h(z)\,
    \bigl(u(x+z) - 2u(x) + u(x-z)\bigr)\, dz.
\]

We now turn to the truncated energy $E_h^N$. The following estimates
are straightforward modifications of the corresponding results for $E_h$.

\begin{lemma}\label{lem:EhN_limit}
For any measurable $u : \mathbb{T}^d \to \mathbb{S}^m$ and any $h > 0$, $N \in \mathbb{N}$,
\begin{align}\label{eq:EhN_upper_bound}
  E_h^N(u) \leq \Bigl(1 + \frac{1}{hN}\Bigr)\,E(u).
\end{align}
In particular, choosing $N = N(h)$ such that $hN\to \infty$
as $h \to 0$, we have
$E_h^N(u) \leq 2\,E(u)$ for all sufficiently
small $h$, and
\[
  \lim_{h \to 0} E_h^{N(h)}(u) = E(u).
\]
\end{lemma}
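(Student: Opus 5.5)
The plan is to work entirely on the Fourier side, where both energies are explicit spectral sums. Since $|u|=1$ pointwise, $\int_{\T^d} 1\,dx = \|u\|_{L^2}^2 = \sum_k |\widehat u(k)|^2$. Hence
\[
  E_h^N(u) = \frac{1}{2h}\sum_{k\in\Z^d}\bigl(1 - e^{-h|k|}\mathbf 1_{\{|k|\le N\}}\bigr)|\widehat u(k)|^2 ,
  \qquad
  E(u) = \frac12\sum_{k\in\Z^d}|k|\,|\widehat u(k)|^2 .
\]
This is the same computation that gives Lemma~\ref{lem:Eh_limit}. Subtracting the two spectral sums also yields the exact difference
\[
  E_h^N(u)-E_h(u)=\frac{1}{2h}\sum_{|k|>N}e^{-h|k|}|\widehat u(k)|^2 ,
\]
which is presumably the identity the paper later refers to. We may assume $E(u)<\infty$, since otherwise \eqref{eq:EhN_upper_bound} is trivial.

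For the upper bound I would split the sum into low and high modes and compare each termwise with $\frac12|k|\,|\widehat u(k)|^2$.

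On $|k|\le N$ the multiplier is $(1-e^{-h|k|})/(2h)$. Since $1-e^{-s}\le s$, this is at most $\frac12|k|$.

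On $|k|>N$ the multiplier is $1/(2h)$. Because $|k|>N$ forces $1<|k|/N$, it is bounded by
\[
  \frac{1}{2h}\le \frac{|k|}{2hN} = \frac{1}{hN}\cdot\frac12|k| .
\]
Summing both parts gives
\[
  E_h^N(u)\le \frac12\sum_{|k|\le N}|k|\,|\widehat u(k)|^2+\frac{1}{hN}\cdot\frac12\sum_{|k|>N}|k|\,|\widehat u(k)|^2\le\Bigl(1+\frac1{hN}\Bigr)E(u).
\]
In fact this bound is slightly better than the one claimed, since the low-mode and high-mode constants are $1$ and $1/(hN)$ separately rather than both $1+1/(hN)$.

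The "in particular" part follows at once. Once $hN\ge1$, the factor $1+1/(hN)$ is at most $2$, so $E_h^N(u)\le 2E(u)$ for all sufficiently small $h$.

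For the limit, Lemma~\ref{lem:Eh_limit} together with the exact difference gives the lower bound $E(u)\ge E_h(u)$, actually $E_h(u)\le E_h^N(u)$, and $\liminf_{h\to0} E_h^N(u)\ge\lim_{h\to0} E_h(u)=E(u)$. The bound just proved gives the matching upper bound $\limsup_{h\to0} E_h^{N(h)}(u)\le\lim(1+1/(hN))E(u)=E(u)$. The two together give the limit.

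There is no real obstacle here. The only point needing care is to use $|u|=1$ to rewrite the constant $1$ via Parseval. Without that step the energy does not take the form of a spectral sum. The termwise bound on the high modes is what explains the $1/(hN)$ term.
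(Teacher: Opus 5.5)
Your proof is correct and follows the paper's argument: both split $E_h^N(u)$ via Plancherel into the low-mode part, bounded by $E(u)$ using $1-e^{-s}\le s$, and the tail $\frac{1}{2h}\sum_{|k|>N}|\widehat u(k)|^2\le \frac{1}{hN}E(u)$. The only cosmetic difference is in the limit: you sandwich $E_h^{N}(u)$ between $E_h(u)$ and $(1+\frac{1}{hN})E(u)$, whereas the paper lets the low-mode part converge as in the case of $E_h$ and the tail vanish.
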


\begin{proof}
The only difference to the untruncated case is the additional tail
term. By Plancherel,
\[
  E_h^N(u)
  = \underbrace{\frac{1}{2h} \sum_{|k| \leq N}
      (1 - e^{-h|k|})\,|\widehat{u}(k)|^2}_{=:\,A_h}
    + \underbrace{\frac{1}{2h} \sum_{|k| > N}
      |\widehat{u}(k)|^2}_{=:\,B_h}.
\]
The term $A_h$ is estimated exactly as for $E_h$.
The tail $B_h$ satisfies
\[
  B_h \leq \frac{1}{2hN}
    \sum_{k} |k|\,|\widehat{u}(k)|^2
  = \frac{E(u)}{hN},
\]
which yields \eqref{eq:EhN_upper_bound} and vanishes whenever
$hN\to \infty$.
\end{proof}

\begin{lemma}\label{lem:Eh-def-truncated}
For every measurable $u : \mathbb{T}^d \to \mathbb{S}^m$,
\[
  E_h^N(u)
  = \frac{1}{4h} \int_{\mathbb{T}^d} \int_{\mathbb{T}^d}
    P_h^N(x - y)\,|u(x) - u(y)|^2\, dy\, dx.
\]
\end{lemma}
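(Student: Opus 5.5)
The identity follows from the pointwise constraint $|u|=1$ together with the fact that $P_h^N$ has total integral one. Since $\widehat{P_h^N}(0) = (2\pi)^{-d/2}e^{0}\mathbf{1}_{\{0\le N\}} = (2\pi)^{-d/2}$, the zero mode is kept by the truncation, so
\[
\int_{\T^d} P_h^N(z)\,dz = (2\pi)^{d/2}\,\widehat{P_h^N}(0) = 1 .
\]
Note that positivity of $P_h^N$ (Lemma~\ref{lem:PhN_positivity}) is not needed. All that matters is that $P_h^N$ is a real, even trigonometric polynomial, which holds because its Fourier coefficients are real and depend only on $|k|$. In particular $P_h^N \in L^\infty(\T^d)$, so every integral below converges absolutely for bounded $u$.

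The plan is to expand the square. For $|u(x)|=|u(y)|=1$ we have
\[
|u(x)-u(y)|^2 = 2 - 2\,u(x)\cdot u(y),
\]
and therefore
\[
\frac{1}{4h}\int_{\T^d}\!\int_{\T^d} P_h^N(x-y)\,|u(x)-u(y)|^2\,dy\,dx
= \frac{1}{2h}\int_{\T^d}\!\int_{\T^d} P_h^N(x-y)\,dy\,dx
- \frac{1}{2h}\int_{\T^d}\!\int_{\T^d} P_h^N(x-y)\,u(x)\cdot u(y)\,dy\,dx .
\]

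For the first term, translation invariance of Lebesgue measure on $\T^d$ and the unit mass give $\int_{\T^d} P_h^N(x-y)\,dy = 1$ for every $x$. The first term is therefore $\frac{1}{2h}\int_{\T^d} 1\,dx$.

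For the second term, we use Fubini, which is justified by $P_h^N \in L^\infty$ and $|u|=1$. The inner integral is
\[
\int_{\T^d} P_h^N(x-y)\,u(y)\,dy = (P_h^N * u)(x),
\]
so the second term equals $-\frac{1}{2h}\int_{\T^d} u(x)\cdot(P_h^N*u)(x)\,dx$. Adding the two terms gives exactly $E_h^N(u)$.

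There is no real obstacle. The only points to check are that the truncation keeps the $k=0$ mode, which gives unit mass, and that the kernel is bounded, which justifies Fubini even though it may change sign. The argument is identical to the one behind Lemma~\ref{lem:Eh_limit}, with $P_h$ replaced by $P_h^N$.
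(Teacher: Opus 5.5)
Your proposal is correct and follows essentially the same route as the paper's proof: expand $|u(x)-u(y)|^2 = 2 - 2\,u(x)\cdot u(y)$ using $|u|=1$, then use $\int_{\mathbb{T}^d} P_h^N(x-y)\,dy = 1$ and Fubini to recover $\frac{1}{2h}\int_{\mathbb{T}^d}\bigl(1 - u\cdot(P_h^N * u)\bigr)\,dx$. Your extra checks, that the $k=0$ mode survives the truncation and that $P_h^N$ is a bounded trigonometric polynomial so that positivity is not needed, are correct and fill in details the paper leaves implicit.
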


\begin{proof}
Since $|u|=1$, $|u(x)-u(y)|^2 = 2 - 2\,u(x)\cdot u(y)$. With
$\int_{\mathbb{T}^d} P_h^N(x-y)\,dy = 1$ and Fubini,
\[
  \int_{\mathbb{T}^d}\!\int_{\mathbb{T}^d}
    P_h^N(x-y)\,|u(x)-u(y)|^2\,dy\,dx
  = 2\int_{\mathbb{T}^d}\bigl(1 - u \cdot (P_h^N * u)\bigr)\,dx
  = 4h\,E_h^N(u). \qedhere
\]
\end{proof}

\begin{lemma}\label{lem:EmN-identity}
For any measurable $u : \mathbb{T}^d \to \mathbb{S}^m$, $h > 0$, and
$N \in \mathbb{N}$,
\begin{equation}\label{eq:EmN-identity}
  E_h^N(u) = E_h(u) + \frac{1}{2h}\sum_{|k| > N}
    e^{-h|k|}\,|\widehat{u}(k)|^2.
\end{equation}
\end{lemma}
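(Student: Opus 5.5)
The identity follows from Plancherel applied to both energies, using the pointwise constraint $|u|=1$ to rewrite the constant term. Since $|u(x)|=1$ for a.e.\ $x$, we have $\int_{\T^d} 1\,dx = \int_{\T^d}|u|^2\,dx = \sum_{k\in\Z^d}|\widehat u(k)|^2$. The Fourier coefficients of $P_h^N*u$ are $e^{-h|k|}\mathbf 1_{\{|k|\le N\}}\widehat u(k)$. With the normalization fixed in the preliminaries, Plancherel gives $\int_{\T^d} u\cdot(P_h^N*u)\,dx = \sum_{|k|\le N} e^{-h|k|}|\widehat u(k)|^2$, where $u$ is real-valued so $\widehat u(-k)=\overline{\widehat u(k)}$. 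Hence
\[
  E_h^N(u) = \frac{1}{2h}\sum_{|k|\le N}(1-e^{-h|k|})|\widehat u(k)|^2 + \frac{1}{2h}\sum_{|k|>N}|\widehat u(k)|^2,
\]
which is the decomposition $A_h+B_h$ already used in the proof of Lemma~\ref{lem:EhN_limit}. In the same way, $E_h(u) = \frac{1}{2h}\sum_{k}(1-e^{-h|k|})|\widehat u(k)|^2$.

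Subtracting the two expressions, the modes $|k|\le N$ cancel. For $|k|>N$ what remains is
\[
  \frac{1}{2h}\sum_{|k|>N}\bigl(1-(1-e^{-h|k|})\bigr)|\widehat u(k)|^2 = \frac{1}{2h}\sum_{|k|>N}e^{-h|k|}|\widehat u(k)|^2 ,
\]
and this is exactly \eqref{eq:EmN-identity}.

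All series converge absolutely, since $u\in L^\infty\subset L^2$ and the multipliers are bounded by $1$. So the rearrangement and the splitting into $|k|\le N$ and $|k|>N$ are legitimate. The only point needing care is the Plancherel constant. Under the paper's convention $\widehat{P_h*u}(k)=e^{-h|k|}\widehat u(k)$, so no stray factor $(2\pi)^{d/2}$ appears. I expect no real obstacle here: the argument is a direct computation, and the sign of the remainder, which gives $E_h\le E_h^N$, is immediate.
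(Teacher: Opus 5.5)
Your proof is correct and is essentially the paper's argument. You apply Plancherel in the unitary normalization and use $|u|=1$ to write $\int_{\mathbb{T}^d}1\,dx=\sum_k|\widehat u(k)|^2$. Subtracting the two mode-by-mode expressions then leaves only the tail $|k|>N$, with weight $1-(1-e^{-h|k|})=e^{-h|k|}$. Your remark that all multipliers are bounded by $1$ and $u\in L^2$ justifies the splitting and rearrangement, which the paper leaves implicit.
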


\begin{proof}
By Plancherel,
\[
  E_h^N(u) - E_h(u)
  = \frac{1}{2h}\sum_{|k|>N}\bigl[|\widehat{u}(k)|^2
    - (1 - e^{-h|k|})|\widehat{u}(k)|^2\bigr]
  = \frac{1}{2h}\sum_{|k|>N} e^{-h|k|}\,|\widehat{u}(k)|^2. \qedhere
\]
\end{proof}

\begin{lemma}\label{lem:PhN-L2-approx}
For any $f \in L^2(\mathbb{T}^d)$ and $N = N(h)$ with
$hN \to \infty$,
\[
  \|P_h^N * f - f\|_{L^2} \to 0 \quad \text{as } h \to 0.
\]
\end{lemma}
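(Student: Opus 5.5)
The statement is a pure Fourier-side statement, so I would argue entirely by Plancherel. Since $\widehat{P_h^N * f}(k) = e^{-h|k|}\mathbf{1}_{\{|k|\le N\}}\widehat f(k)$ (the normalisation $(2\pi)^{-d/2}$ of $\widehat{P_h^N}$ cancels against the convolution theorem), we get
\[
  \|P_h^N * f - f\|_{L^2}^2
  = \sum_{|k|\le N} \bigl(1-e^{-h|k|}\bigr)^2 |\widehat f(k)|^2
  + \sum_{|k|>N} |\widehat f(k)|^2 .
\]
I will treat the two sums separately.

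\textbf{Tail sum.} Because $hN\to\infty$ with $h\to 0$, we have $N = N(h) \ge (hN)/h \to \infty$. The second sum is therefore the tail of the convergent series $\sum_k |\widehat f(k)|^2 = \|f\|_{L^2}^2 < \infty$, and it tends to $0$.

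\textbf{Low-mode sum.} I bound it by the full sum $\sum_{k} (1-e^{-h|k|})^2|\widehat f(k)|^2$ and apply dominated convergence for series. Each summand tends to $0$ as $h\to 0$ for fixed $k$, and it is dominated by $|\widehat f(k)|^2$, which is summable. Equivalently, this is the strong continuity $\|P_h * f - f\|_{L^2}\to 0$ of the Poisson semigroup.

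If a quantitative splitting is preferred, fix $\varepsilon>0$ and choose $K$ with $\sum_{|k|>K}|\widehat f(k)|^2<\varepsilon$. Then the low-mode sum is at most $(1-e^{-hK})^2\|f\|_{L^2}^2 + \varepsilon$, and the first term vanishes as $h\to0$.

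There is no real obstacle here. The only point requiring a remark is that $hN\to\infty$ forces $N\to\infty$; apart from that the argument is the standard one. Note that no positivity of $P_h^N$ (Lemma~\ref{lem:PhN_positivity}) is needed.
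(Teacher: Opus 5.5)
Your proof is correct and follows essentially the paper's route: Plancherel, a split into the modes $|k|\le N$ and $|k|>N$, and strong continuity of the Poisson semigroup to handle the low modes. The only difference is in the high modes. The paper rewrites the expression as $\|P_h * f - f\|_{L^2}^2$ plus $\sum_{|k|>N} e^{-h|k|}(2-e^{-h|k|})\,|\widehat{f}(k)|^2 \le 2e^{-hN}\|f\|_{L^2}^2$. That bound is where it uses $hN\to\infty$, and it is uniform on $L^2$-bounded sets. Your tail-of-a-convergent-series argument needs only $N\to\infty$, so it actually proves the lemma under that weaker hypothesis.
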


\begin{proof}
By Plancherel,
\begin{align*}
  \|P_h^N * f - f\|_{L^2}^2
  &= \sum_{|k| \leq N} (e^{-h|k|} - 1)^2\,|\widehat{f}(k)|^2
    + \sum_{|k| > N} |\widehat{f}(k)|^2 \\
  &= \|P_h * f - f\|_{L^2}^2
    + \sum_{|k| > N} \bigl[1 - (e^{-h|k|} - 1)^2\bigr]
      \,|\widehat{f}(k)|^2.
\end{align*}
The first term vanishes as $h \to 0$. For the remainder,
$1 - (e^{-h|k|}-1)^2 = e^{-h|k|}(2-e^{-h|k|}) \leq 2e^{-hN}$
for $|k| > N$, so the sum is bounded by
$2e^{-hN}\|f\|_{L^2}^2 \to 0$.
\end{proof}

The \emph{truncated approximative half-Laplacian} is
\[
  (-\Delta)^{1/2}_{h,N} u(x)
  := \frac{u(x) - (P_h^N * u)(x)}{h}.
\]
Since $P_h^N$ is even, symmetrising the convolution gives
\[
  (-\Delta)^{1/2}_{h,N} u(x)
  = -\frac{1}{2h} \int_{\mathbb{T}^d} P_h^N(z)\,
    \bigl(u(x+z) - 2u(x) + u(x-z)\bigr)\,dz.
\]

%% file: compactness.tex

\subsection{Variational interpretation and energy bounds}

In this section we write $P_h^*$ to denote either the untruncated
kernel $P_h$ or the truncated kernel $P_h^N$, and correspondingly
$E_h^*$, $(-\Delta)^{1/2}_{h,*}$ for the associated energy and
approximative half-Laplacian. All results below hold for both choices
unless stated otherwise. The starting point is the variational formulation of the thresholding scheme, which is entirely analogous to the one used in \cite{LauxOt, LauxYip2019}. From the minimizing property, we obtain the following estimates. Since their proofs are straightforward consequences of the variational formulation, we omit the details.

\begin{proposition}
\label{lem:var-structure}
Each step of the thresholding scheme satisfies
\[
  \operatorname*{arg\,min}_{|v| \leq 1}\ \biggl(
    E_h^*(v) + \frac{1}{2h}\int_{\mathbb{T}^d}
      (v-u^{n-1})\cdot\bigl(P_h^* * (v-u^{n-1})\bigr)\,dx
  \biggr)
  \;=\; \frac{P_h^* * u^{n-1}}{|P_h^* * u^{n-1}|}
  \;=\; u^n ,
\]
which leads to the energy-dissipation estimate
\begin{align}\label{eq:energy_dissipation}
  E_h^*(u^h(\left\lfloor T/h \right\rfloor h)) + \tfrac12\int_0^{\left\lfloor T/h \right\rfloor h} \int_{\mathbb{T}^d}
    |P_{h/2}^* * \partial_t^h u^h|^2\, dx\, dt
  \;\leq\; E_h^*(u^0).
\end{align}
Let $u : \mathbb{T}^d \to \mathbb{S}^m$ be measurable. Then, $h \mapsto E_h^*(u)$ is
non-increasing and
\begin{align}\label{eq:HEstimate}
  E(P_{h/2}^* * u) \leq E_h^*(u).
\end{align}
\end{proposition}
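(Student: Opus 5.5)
The plan is to work entirely on the Fourier side, where everything in the proposition is diagonal. Write $m_k := (2\pi)^{d/2}\widehat{P_h^*}(k)$, so $m_k = e^{-h|k|}$ or $e^{-h|k|}\mathbf 1_{\{|k|\le N\}}$; in both cases $0\le m_k\le 1$. The positivity $P_h^*>0$ is automatic for $P_h$ and holds for $P_h^N$ by Lemma~\ref{lem:PhN_positivity} in the admissible regime. The argument has four steps.

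\textbf{Step 1 (minimization).} Expand the functional. For $|v|\le1$,
\[
E_h^*(v)+\tfrac{1}{2h}\int (v-u)\cdot P_h^*(v-u)
=\tfrac{1}{2h}\int\bigl(1+u\cdot P_h^*u\bigr)\;-\;\tfrac1h\int v\cdot (P_h^* * u),
\]
because the quadratic terms $v\cdot P_h^*v$ cancel and $P_h^*$ is symmetric. Only the linear term depends on $v$, so the problem reduces to maximizing $\int v\cdot(P_h^**u)$ pointwise under $|v|\le1$. The maximizer is $v=P_h^**u/|P_h^**u|$, and it is unique wherever $P_h^**u\neq0$. For $P_h^N$ I may need to make sure $|P_h^**u|>0$ a.e.; I would accept the scheme as defined wherever the normalization makes sense.

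\textbf{Step 2 (energy dissipation).} Compare the functional at $v=u^n$ with its value at $v=u^{n-1}$. Note that $|u^{n-1}|=1$ and that the distance term vanishes at $v=u^{n-1}$. This gives
\[
E_h^*(u^n)+\tfrac1{2h}\int (u^n-u^{n-1})\cdot P_h^**(u^n-u^{n-1})\le E_h^*(u^{n-1}).
\]
Since $m_k\ge0$, one has $m_k=(\sqrt{m_k})^2$, and for the untruncated kernel $\sqrt{m_k}=e^{-h|k|/2}$. Hence the quadratic term equals $\frac1{2h}\|P_{h/2}^**(u^n-u^{n-1})\|^2_{L^2}$, which is $\frac h2\|P^*_{h/2}*\partial_t^hu^h\|^2$. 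For the truncated case I interpret $P_{h/2}^N$ via the multiplier $e^{-h|k|/2}\mathbf 1_{|k|\le N}$, whose square is exactly $m_k$. Summing over $n=1,\dots,\lfloor T/h\rfloor$ and converting the sum to the time integral of the interpolant yields \eqref{eq:energy_dissipation}.

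\textbf{Step 3 (monotonicity in $h$).} By Plancherel and $\sum_k|\widehat u(k)|^2=|\T^d|$, which follows from $|u|=1$,
\[
E_h^*(u)=\tfrac1{2h}\sum_k(1-m_k)|\widehat u(k)|^2 .
\]
For the full kernel, $(1-e^{-h\lambda})/h$ is non-increasing in $h$ because $s\mapsto(1-e^{-s})/s$ is decreasing. In the truncated case the modes $|k|>N$ contribute $1/(2h)$, which is also decreasing, with $N$ held fixed.

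\textbf{Step 4 (inequality \eqref{eq:HEstimate}).} Again by Plancherel,
\[
E(P^*_{h/2}*u)=\tfrac12\sum_k |k|\,\sqrt{m_k}^{\,2}|\widehat u(k)|^2=\tfrac12\sum_k|k|m_k|\widehat u(k)|^2 ,
\]
so it suffices to show $|k|m_k\le(1-m_k)/h$ for each $k$. For truncated modes $m_k=0$ and the bound is trivial. Otherwise I need $s e^{-s}\le 1-e^{-s}$ with $s=h|k|$, which is equivalent to $e^{s}\ge1+s$.

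The only delicate point is the truncated case. There I must fix the meaning of $P_{h/2}^N$ so that its square reproduces $P_h^N$, and I must ensure that the minimization in Step 1 is well posed. That in turn needs the positivity from Lemma~\ref{lem:PhN_positivity} (so that $P_h^N*u\neq0$ does not degenerate) and $m_k\ge0$ (so that the distance term is nonnegative). The remaining steps are routine.
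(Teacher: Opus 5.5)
Your proof is correct and follows the route the paper has in mind; the paper omits the details as a straightforward consequence of the variational formulation. The functional is affine in $v$ on $\{|v|\le 1\}$, comparing with $v=u^{n-1}$ and using $P_h^*=P_{h/2}^* \ast P_{h/2}^*$ gives \eqref{eq:energy_dissipation}, and the monotonicity and \eqref{eq:HEstimate} follow mode by mode from the decay of $s\mapsto(1-e^{-s})/s$ and from $se^{-s}\le 1-e^{-s}$. One small correction: pointwise positivity of $P_h^N$ is never used here (only $m_k\ge 0$), and it is not what guarantees $P_h^* \ast u^{n-1}\neq 0$ a.e.; the paper obtains that from analyticity of $P_h^* \ast u^{n-1}$ together with an energy bound that rules out $P_h^* \ast u^{n-1}\equiv 0$.
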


The thresholding step $u^{n-1} \mapsto \frac{P_h^* * u^{n-1}}{|P_h^* * u^{n-1}|}$
requires $P_h^* * u^{n-1} \neq 0$ a.e. which is ensured for
\[
h < h_0 := \frac{(2 \pi)^d}{4 E(u^0)} \quad \text{and} \quad N \geq \frac{1}{h}.
\]
Indeed, assuming $P_h^N * u^0 = 0$ a.e., it follows that
\begin{align*}
    \frac{(2\pi)^d}{2h} = E_h^N(u^0) \leq 2 E(u^0) = \frac{(2\pi)^d}{2h_0}.
\end{align*}
A contradiction, thus by analyticity $P_h^N * u^0 \neq 0$ a.e. By
Lemma~\ref{lem:var-structure} we can repeat this argument inductively to
$u^n$. Thus the iteration is well defined. The untruncated case follows again by analyticity of $P_h * u$ and that
$P_h * u = 0$ is equivalent to $u = 0$, which cannot hold for $|u| = 1$.
\\\\
The estimate \eqref{eq:energy_dissipation} is not sharp and lacks, in the language of minimizing movements,
the contribution from the metric slope. However, the energy drop per
step in fact satisfies an exact identity, which we establish next.

\begin{theorem}[Energy-dissipation identity]\label{thm:energy-identity}
For every $k \geq 1$, writing $t_k := kh$ and $u^h(t_k) = u^k$, the iterates of
the thresholding scheme satisfy
\[
  E_h^*(u^h(t_k))
  + \frac12\int_0^{t_k}\!\!\int_{\mathbb{T}^d}
    |P_{h/2}^* * \partial_t^h u^h|^2\,dx\,dt
  + \frac12\int_0^{t_k}\!\!\int_{\mathbb{T}^d}
    |P_h^* * u^h|\,|\partial_t^h u^h|^2\,dx\,dt
  = E_h^*(u^0).
\]
Both dissipation terms are nonnegative; dropping the second recovers
\eqref{eq:energy_dissipation}.
\end{theorem}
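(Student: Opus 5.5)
The plan is to prove a one-step identity and then sum it telescopically. Fix $n\ge 1$ and write $u:=u^{n-1}$, $v:=u^n$, $w:=v-u$. The claim reduces to
\[
  E_h^*(v) + \frac{1}{2h}\int_{\mathbb{T}^d} w\cdot (P_h^* * w)\,dx
  + \frac{1}{2h}\int_{\mathbb{T}^d} |P_h^* * u|\,|w|^2\,dx = E_h^*(u),
\]
together with the observation that $\int w\cdot(P_h^**w)\,dx=\int |P_{h/2}^* * w|^2\,dx$.

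\emph{Step 1: algebraic expansion.} Since $P_h^*$ is even, convolution with it is symmetric in $L^2$, so $\int a\cdot(P_h^* * b)=\int b\cdot(P_h^* * a)$. Expanding gives
\[
\int w\cdot(P_h^* * w) = \int v\cdot(P_h^* * v) - 2\int v\cdot(P_h^* * u) + \int u\cdot(P_h^* * u).
\]
Inserting this into the definition of $E_h^*(v)=\frac{1}{2h}\int(1-v\cdot P_h^* * v)$, the $v\cdot P_h^**v$ terms cancel. One obtains
\[
E_h^*(u) - E_h^*(v) - \frac{1}{2h}\int w\cdot(P_h^* * w) = \frac1h\int (v-u)\cdot (P_h^* * u)\,dx .
\]
This uses only the symmetry of $P_h^*$ and is unconditional.

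\emph{Step 2: use the thresholding step.} By definition of the scheme, $P_h^* * u = |P_h^* * u|\,v$ a.e. This is where the preceding discussion enters: $P_h^* * u^{n-1}\neq 0$ a.e., for $h<h_0$ and $N\ge 1/h$ in the truncated case. Hence, using $|v|=|u|=1$,
\[
(v-u)\cdot(P_h^* * u) = |P_h^* * u|\,(1-u\cdot v) = \tfrac12 |P_h^* * u|\,|v-u|^2 .
\]
This yields the one-step identity, and the nonnegativity of the new dissipation term is evident.

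\emph{Step 3: rewriting as time integrals and summing.} On the Fourier side, $\widehat{P_h^*}(k)(2\pi)^{d/2}=e^{-h|k|}\mathbf 1_{\{|k|\le N\}}$ equals the square of the corresponding multiplier of $P_{h/2}^*$; the indicator is idempotent. Thus Plancherel gives $\int w\cdot(P_h^* * w)=\|P_{h/2}^* * w\|_{L^2}^2\ge 0$. On the interval $((n-1)h,nh)$ we have $\partial_t^h u^h=w/h$ and $u^h=u^{n-1}$, so $P_h^* * u^h = P_h^* * u$ there. Consequently
\[
\tfrac12\int_{(n-1)h}^{nh}\!\!\int |P_{h/2}^* * \partial_t^h u^h|^2 = \frac{1}{2h}\|P_{h/2}^* * w\|^2,
\]
and the analogous identity holds for the term with weight $|P_h^* * u^h|$. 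Summing the one-step identities over $n=1,\dots,k$ telescopes the energies and gives the theorem. Dropping the nonnegative second dissipation term recovers \eqref{eq:energy_dissipation}.

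The only genuinely delicate point is Step 2: the identity $P_h^* * u^{n-1}=|P_h^* * u^{n-1}|\,u^n$ requires the nonvanishing of $P_h^* * u^{n-1}$ a.e., which is supplied by the well-posedness argument preceding the theorem. Everything else is exact algebra.
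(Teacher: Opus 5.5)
Your proposal is correct and follows the paper's argument step for step. It uses the same expansion of $E_h^*(u^{n-1})-E_h^*(u^n)$ into the quadratic term $\frac{1}{2h}\int (u^n-u^{n-1})\cdot\bigl(P_h^* * (u^n-u^{n-1})\bigr)\,dx$ plus the cross term, the factorization $P_h^*=P_{h/2}^* * P_{h/2}^*$, the scheme identity $P_h^* * u^{n-1}=|P_h^* * u^{n-1}|\,u^n$ combined with $(u^n-u^{n-1})\cdot u^n=\tfrac12|u^n-u^{n-1}|^2$, and summation over steps. One small refinement: the identity $P_h^* * u^{n-1}=|P_h^* * u^{n-1}|\,u^n$ holds trivially wherever $P_h^* * u^{n-1}=0$, for any unit-vector choice of $u^n$, so nonvanishing is needed only for the scheme to be well defined, not for the identity itself.
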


\begin{proof}
Inserting $E_h^*(u) = \frac{1}{2h}\int_{\mathbb{T}^d}
\bigl(1 - u\cdot(P_h^* * u)\bigr)\,dx$ twice and expanding,
\begin{align}\label{eq:energy-split}
  E_h^*(u^{n-1}) - E_h^*(u^n)
  &= \frac{1}{2h}\int_{\mathbb{T}^d}\bigl(u^n\cdot(P_h^* * u^n)
     - u^{n-1}\cdot(P_h^* * u^{n-1})\bigr)\,dx \notag\\
  &= \frac{1}{2h}\int_{\mathbb{T}^d}(u^n-u^{n-1})\cdot
     \bigl(P_h^* * (u^n-u^{n-1})\bigr)\,dx \notag\\
  &\quad + \frac{1}{h}\int_{\mathbb{T}^d}(u^n-u^{n-1})\cdot
     (P_h^* * u^{n-1})\,dx.
\end{align}
With $u^n-u^{n-1} = h\,\partial_t^h u^h$ and
$P_h^* = P_{h/2}^* * P_{h/2}^*$, the first term of \eqref{eq:energy-split} is
\[
  \frac{1}{2h}\|P_{h/2}^* * (u^n-u^{n-1})\|_{L^2}^2
  = \frac{h}{2}\,\|\partial_t^h(P_{h/2}^* * u^h)\|_{L^2}^2.
\]
The scheme gives $P_h^* * u^{n-1} = |P_h^* * u^{n-1}|\,u^n$,
and $|u^n| = |u^{n-1}| = 1$ gives
$(u^n - u^{n-1})\cdot u^n = \tfrac12|u^n-u^{n-1}|^2$, so the second term of
\eqref{eq:energy-split} is
\[
  \frac{1}{2h}\int_{\mathbb{T}^d}|P_h^* * u^{n-1}|\,|u^n-u^{n-1}|^2\,dx
  = \frac{h}{2}\int_{\mathbb{T}^d}|P_h^* * u^{n-1}|\,|\partial_t^h u^h|^2\,dx.
\]
Summation over $n = 1,\dots,k$ gives the claim.
\end{proof}

\subsubsection*{Euler-Lagrange and approximate evolution equation}
With $\displaystyle{d_h^2(u,v)= \int_{\T^d} (u-v) \cdot P_h^*(u-v) \, dx}$,
the iterate $u^n$ minimizes
\[
u \mapsto E_h^*(u) + \frac{1}{2h}d_h^2(u, u^{n-1})
\] among $|u| \leq 1$ and satisfies
$|u^n| = 1$, so it is a critical point with respect to all tangential
variations $\delta u = (\mathrm{Id}-u^n\otimes u^n)\xi$ 
where $\xi \in C^\infty(\mathbb{T}^d;\R^{m+1})$. The piecewise constant interpolant $u^h$
therefore satisfies
\[
  \int_{\mathbb{T}^d}
    \bigl(P_h^* * \partial_t^h u^h(\cdot - h) + (-\Delta)^{1/2}_{h,*} u^h\bigr)
    \cdot (\mathrm{Id} - u^h\otimes u^h)\,\xi\,dx = 0,
\]
i.e., the approximate evolution equation
\begin{equation}\label{eq:approx_evol}
     (\mathrm{Id} - u^h \otimes u^h)\,\bigl(
    P_h^* * \partial_t^h u^h(\cdot - h)
    + (-\Delta)^{1/2}_{h,*} u^h
  \bigr) = 0
  \quad \text{in } \mathbb{T}^d \times (h,T).
\end{equation}
To avoid the Lagrange multiplier arising from the projection we antisymmetrize:

\begin{lemma}\label{lem:approx-EL}
The interpolant $u^h$ satisfies 
\begin{align*}
  \int_h^T \!\int_{\mathbb{T}^d} \Big(
    &\;u_j^h\,(P_h^* * \partial_t^h u_i^h(\cdot - h))
    - u_i^h\,(P_h^* * \partial_t^h u_j^h(\cdot - h)) \\
    &+ u_j^h\,(-\Delta)^{1/2}_{h,*} u_i^h
    - u_i^h\,(-\Delta)^{1/2}_{h,*} u_j^h
  \Big)\,\zeta\, dx\, dt = 0
\end{align*}
for all $\zeta \in C^\infty(\mathbb{T}^d \times [0,T])$ and all
$i,j \in \{1,\dots,m{+}1\}$. Moreover, for $\zeta \in L^\infty(\mathbb{T}^d)$,
\begin{align*}
  &\int_{\mathbb{T}^d} \bigl(
    u_j^h\,(-\Delta)^{1/2}_{h,*} u_i^h
    - u_i^h\,(-\Delta)^{1/2}_{h,*} u_j^h
  \bigr)\,\zeta\, dx \\
  &\qquad = \frac{1}{2h} \int_{\mathbb{T}^d} P_h^*(z)
    \int_{\mathbb{T}^d} \bigl[
      ((u_i^h)^z - u_i^h)\,u_j^h - ((u_j^h)^z - u_j^h)\,u_i^h
    \bigr]\,(\zeta^z - \zeta)\, dx\, dz,
\end{align*}
where $u^z(x) := u(x+z)$ for $z \in \mathbb{T}^d$.
\end{lemma}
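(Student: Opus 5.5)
The first identity follows from the approximate evolution equation \eqref{eq:approx_evol} by testing with the antisymmetric field $\xi = \zeta\,(u_j^h e_i - u_i^h e_j)$, where $e_i$ denotes the $i$-th standard basis vector. Pointwise, $\xi\cdot u^h = \zeta(u_j^h u_i^h - u_i^h u_j^h) = 0$, so $\xi$ is tangential and $(\mathrm{Id}-u^h\otimes u^h)\xi = \xi$. Setting $w := P_h^* * \partial_t^h u^h(\cdot-h) + (-\Delta)^{1/2}_{h,*}u^h$, the equation says that $w$ is parallel to $u^h$ a.e. in $\mathbb{T}^d\times(h,T)$. Hence $w\cdot\xi = \zeta(u_j^h w_i - u_i^h w_j) = 0$ pointwise. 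Integrating over $\mathbb{T}^d\times(h,T)$ gives exactly the first displayed identity. The only things to check are measurability and integrability. For fixed $h$ everything is bounded: $|u^h|=1$, $P_h^*$ is smooth, and the difference quotients are bounded by $2/h$. So no regularity of $\zeta$ beyond $L^\infty$ is needed here; smoothness in time is irrelevant at this stage. One could also argue directly at each step $n$ from the criticality of $u^n$ and then sum over time intervals, since $u^h$ is piecewise constant in time.

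For the second identity I fix $t$ and drop the superscript $h$. By definition, $(-\Delta)^{1/2}_{h,*}u = h^{-1}\int P_h^*(z)(u - u^z)\,dz$, using $\int P_h^* = 1$. Then
\[
u_j(-\Delta)^{1/2}_{h,*}u_i - u_i(-\Delta)^{1/2}_{h,*}u_j = -\frac1h\int P_h^*(z)\big[(u_i^z-u_i)u_j - (u_j^z-u_j)u_i\big]\,dz,
\]
where the local terms $u_iu_j - u_ju_i$ cancel. Denote the bracket by $A(x,z) = u_i^z u_j - u_j^z u_i$. Multiply by $\zeta(x)$ and integrate in $x$.

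The key symmetrization step uses that $P_h^*$ is even. Substituting $x\mapsto x-z$ and $z\mapsto -z$ gives $\int\!\int P_h^*(z)A(x,z)\zeta(x)\,dx\,dz = \int\!\int P_h^*(z)A(x+z,-z)\zeta(x+z)\,dx\,dz$. Moreover $A(x+z,-z) = u_i(x)u_j(x+z) - u_j(x)u_i(x+z) = -A(x,z)$. Therefore
\[
\int\!\int P_h^* A\,\zeta = -\int\!\int P_h^* A\,\zeta^z,
\]
which gives $\int\!\int P_h^* A\zeta = -\tfrac12\int\!\int P_h^*A(\zeta^z-\zeta)$. Combined with the prefactor $-1/h$, this yields $\frac{1}{2h}\int\!\int P_h^*A(\zeta^z-\zeta)$, and $A$ equals the bracket in the claim because the subtracted terms cancel. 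All integrals are absolutely convergent by boundedness, so Fubini is justified; this holds for the truncated kernel as well, since only evenness and unit mass of $P_h^*$ are used, not positivity.

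The main obstacle is mild. It is to keep the sign conventions straight in the change of variables and to confirm that $A$ is antisymmetric under $(x,z)\mapsto(x+z,-z)$. The conclusion depends on that antisymmetry.
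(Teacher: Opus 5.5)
Your proof is correct, and for the first identity it is the paper's argument: the approximate evolution equation makes $F^h$ parallel to $u^h$, so $u_j^hF_i^h-u_i^hF_j^h=0$ pointwise, and your detour through the tangential test field $\xi$ is harmless but not needed. The paper's proof does not write out the second identity, and your symmetrization supplies it correctly for both the untruncated and truncated kernels; it uses only the evenness and unit mass of $P_h^*$ together with the antisymmetry $A(x+z,-z)=-A(x,z)$.
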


\begin{proof}

Writing $F^h := P_h^* * \partial_t^h u^h(\cdot-h) + (-\Delta)^{1/2}_{h,*} u^h$,
\eqref{eq:approx_evol} says $(\mathrm{Id}-u^h\otimes u^h)F^h = 0$,
i.e.\ $F^h = (u^h\cdot F^h)\,u^h$, so $F^h$ is \emph{parallel} to $u^h$
with scalar factor $\lambda^h := u^h\cdot F^h$. Consequently, for any two
indices $i,j$,
\[
  u_j^h F_i^h - u_i^h F_j^h
  = \lambda^h\bigl(u_j^h u_i^h - u_i^h u_j^h\bigr) = 0
\]
pointwise. 
Writing out $F^h$ and testing against
$\zeta \in C^\infty(\mathbb{T}^d\times[0,T])$, then integrating over
$\mathbb{T}^d\times(h,T)$, gives the stated identity. 
\end{proof}

\subsection{Passing to the limit $h \to 0$}

We now pass to the limit $h \to 0$ in the approximative conservation
law (Lemma~\ref{lem:approx-EL}). The goal is to show
that the antisymmetric term converges to its continuous counterpart
involving $d_{\frac{1}{2}}$. This is where the untruncated and truncated
schemes require separate arguments.

By virtue of Lemma~\ref{lem:approx-EL}, the key quantity to analyze is
\[
  \frac{1}{2h} \int_{\mathbb{T}^d} P_h^*(z)
    \int_{\mathbb{T}^d} \bigl[
      ((u_i^h)^z - u_i^h)\,u_j^h - ((u_j^h)^z - u_j^h)\,u_i^h
    \bigr]\,(\zeta^z - \zeta)\, dx\, dz.
\]
The formal limit $P_h(z)/h \to c_d/|z|^{d+1}$ suggests convergence
to the fractional gradient expression
\[
  \int_{\mathbb{T}^d} \bigl(
    \langle d_{\frac{1}{2}}(P_h^* * u_i^h), d_{\frac{1}{2}} \zeta
      \rangle_{\mathrm{od}}\, u_j^h
    - \langle d_{\frac{1}{2}}(P_h^* * u_j^h), d_{\frac{1}{2}} \zeta
      \rangle_{\mathrm{od}}\, u_i^h
  \bigr)\, dx.
\]
We make this rigorous via Fourier analysis.

\begin{lemma}[Fourier representation]\label{lem:fourier-rep}
For measurable $u : \mathbb{T}^d \to \mathbb{S}^m$ and
$\zeta \in C^\infty(\mathbb{T}^d)$,
\begin{align*}
  \frac{1}{2h} \int_{\mathbb{T}^d} &P_h^*(z)
    \int_{\mathbb{T}^d} \bigl[
      (u_i^z - u_i)\,u_j - (u_j^z - u_j)\,u_i
    \bigr]\,(\zeta^z - \zeta)\, dx\, dz \\
  &= \frac{1}{2(2\pi)^{d/2}}
    \sum_{k,n \in \mathbb{Z}^d}
    A_{ij}(k,n)\,\widehat{\zeta}(n)\, K_h^*(k,n),
\end{align*}
where $A_{ij}(k,n) = \widehat{u}_i(k)\,\widehat{u}_j(-k-n)
- \widehat{u}_j(k)\,\widehat{u}_i(-k-n)$ and the kernel is
\[
  K_h(k,n) = \frac{e^{-h|k+n|} - e^{-h|k|} - e^{-h|n|} + 1}{h}
\]
for the untruncated scheme, respectively
\[
  K_h^N(k,n) = \frac{
    e^{-h|k+n|}\mathbf{1}_{\{|k+n| \leq N\}}
    - e^{-h|k|}\mathbf{1}_{\{|k| \leq N\}}
    - e^{-h|n|}\mathbf{1}_{\{|n| \leq N\}}
    + 1}{h}
\]
for the truncated scheme.
\end{lemma}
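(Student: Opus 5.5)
The plan is to prove the identity by direct expansion followed by Parseval, term by term. First observe that in the bracket the terms $u_iu_j - u_ju_i$ cancel, so the integrand equals $(u_i^z u_j - u_j^z u_i)(\zeta^z-\zeta)$. This expands into four trilinear expressions:
\[
  u_i^z u_j \zeta^z,\qquad -u_i^z u_j\zeta,\qquad -u_j^z u_i\zeta^z,\qquad u_j^z u_i \zeta .
\]
The result is manifestly antisymmetric under $i\leftrightarrow j$. It therefore suffices to compute the $x$-integral of a generic product $f(x+a)\,g(x+b)\,\zeta(x+c)$ with shifts $a,b,c\in\{0,z\}$, and then antisymmetrize to produce $A_{ij}$.

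For each such term, insert the Fourier expansions of $u_i$, $u_j$ and $\zeta$. Integrating $e^{i(k+l+n)\cdot x}$ over $\T^d$ gives $(2\pi)^d\delta_{k+l+n=0}$, and together with the three normalizing factors $(2\pi)^{-d/2}$ this leaves $(2\pi)^{-d/2}\sum_{k,n}\widehat f(k)\widehat g(-k-n)\widehat\zeta(n)\,e^{i\theta\cdot z}$. Here the phase $\theta$ is the sum of the frequencies carrying a shift by $z$; for instance, before any relabelling it is $k+n$ for $u_i^zu_j\zeta^z$. One can shift $x\mapsto x-z$ to move the shift onto a single factor, which makes the phase transparent.

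Next integrate against $P_h^*(z)\,dz$. Using $\int_{\T^d}P_h^*(z)e^{-i\theta\cdot z}\,dz=(2\pi)^{d/2}\widehat{P_h^*}(\theta)$ and the evenness of $P_h^*$, each term becomes $e^{-h|\theta|}$, with the additional indicator $\mathbf 1_{\{|\theta|\le N\}}$ in the truncated case. The four terms produce $\theta=k+n$, $\theta=k$, $\theta=n$ and $\theta=0$ respectively. The last uses $\int P_h^*=1$, which holds for both kernels since $\widehat{P_h^N}(0)=(2\pi)^{-d/2}$. To bring the two middle terms and the $u_j^zu_i\zeta$ term into the common form $\widehat u_i(k)\widehat u_j(-k-n)$, relabel $k\mapsto -k-n$ where needed, i.e. swap the roles of the two $u$-frequencies. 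The antisymmetric partner then produces $-\widehat u_j(k)\widehat u_i(-k-n)$.

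Collecting the four contributions with signs $+,-,-,+$ yields exactly the numerator of $K_h^*(k,n)$ multiplied by $A_{ij}(k,n)\widehat\zeta(n)$. The prefactor $\frac{1}{2h}$ and the overall $(2\pi)^{-d/2}$ then give the stated constant $\frac{1}{2(2\pi)^{d/2}}$, with the $1/h$ absorbed into $K_h^*$.

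The only genuine obstacle is justifying the interchanges of sums and integrals, together with the careful bookkeeping of the relabellings so that all four terms share the same pair $(\widehat u_i(k),\widehat u_j(-k-n))$ and no sign is lost. For the interchanges I would note that $u\in L^\infty\subset L^2$, so $\sum_k|\widehat u_i(k)||\widehat u_j(-k-n)|\le\|u\|_{L^2}^2$ by Cauchy--Schwarz. Since $\zeta$ is smooth, $\widehat\zeta\in\ell^1$. Finally, $|K_h^*|\le 4/h$ for fixed $h$. Hence the double series converges absolutely, Fubini applies, and the termwise Parseval computation is legitimate. Alternatively, one can first prove the identity for trigonometric polynomials $u$ and pass to the limit in $L^2$, since both sides are continuous bilinear in $(u_i,u_j)$ for fixed $\zeta$ and $h$.
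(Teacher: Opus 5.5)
Your overall strategy matches the paper's: expand in Fourier series, use orthogonality of $e^{ik\cdot x}$, average the resulting phases against $P_h^*$, and justify the interchanges via $\widehat\zeta\in\ell^1$ and Cauchy--Schwarz. That part is fine. The gap is in exactly the bookkeeping step you singled out as the main obstacle.

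\textbf{The phase assignment is wrong.} Once you cancel $u_iu_j-u_ju_i$, your four terms do \emph{not} produce the phases $\theta=k+n,\,k,\,n,\,0$. Write everything with the common factor $\widehat u_i(k)\,\widehat u_j(-k-n)\,\widehat\zeta(n)$. Then $u_i^zu_j\zeta^z$ and $-u_i^zu_j\zeta$ carry $e^{i(k+n)\cdot z}$ and $-e^{ik\cdot z}$, as you say. However, $-u_j^zu_i\zeta^z$ carries $-e^{-ik\cdot z}$ (substitute $x\mapsto x-z$ to move the shift onto $u_i$), and $u_j^zu_i\zeta$ carries $+e^{-i(k+n)\cdot z}$. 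The last term plainly depends on $z$, so it cannot yield $\theta=0$. The phases $n$ and $0$ belong to $-u_iu_j\zeta^z$ and $u_iu_j\zeta$, which are precisely the terms you discarded at the outset. Carried out correctly, your computation gives
\[
  \frac{1}{2(2\pi)^{d/2}h}\sum_{k,n\in\Z^d}A_{ij}(k,n)\,\widehat\zeta(n)\,\bigl(e^{-h|k+n|}-e^{-h|k|}\bigr).
\]
Here one uses that $e^{-h|k+n|}-e^{-h|k|}$ changes sign under $k\mapsto -k-n$, and in the truncated case the indicators $\mathbf{1}_{\{|k+n|\le N\}}$ and $\mathbf{1}_{\{|k|\le N\}}$ are inserted. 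This is not the kernel $K_h^*$ of the statement, so ``collecting with signs $+,-,-,+$'' does not give its numerator.

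\textbf{How to close the gap.} The stated identity is nevertheless true, and one extra observation suffices. For every fixed $n$, $\sum_{k}A_{ij}(k,n)=0$: the substitution $k\mapsto -k-n$ maps $\sum_k\widehat u_j(k)\,\widehat u_i(-k-n)$ onto $\sum_k\widehat u_i(k)\,\widehat u_j(-k-n)$, and both series converge absolutely by Cauchy--Schwarz. This is the Fourier-side form of $u_iu_j-u_ju_i=0$. Consequently any kernel term depending on $n$ alone contributes nothing. Such terms include $(1-e^{-h|n|})/h$, respectively $(1-e^{-h|n|}\mathbf{1}_{\{|n|\le N\}})/h$, and adding them converts your kernel into $K_h^*$.

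The paper avoids the issue by not cancelling. It keeps $(u_i^z-u_i)\,u_j\,(\zeta^z-\zeta)$ intact, whose Fourier expansion carries the factor $(e^{ik\cdot z}-1)(e^{in\cdot z}-1)$. The $P_h^*$-average of this factor is exactly the numerator of $K_h^*$. The $i\leftrightarrow j$ part is handled identically, and the difference produces $A_{ij}$ directly.
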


\begin{proof}
The Fourier series of $u$ converges in $L^2(\mathbb{T}^d)$, while
that of $\zeta$ converges in $L^\infty(\mathbb{T}^d)$; by limit
interchange we obtain
\[
  \int_{\mathbb{T}^d} (u_i^z - u_i)\,u_j\,(\zeta^z - \zeta)\, dx
  = (2\pi)^{-d/2}
    \sum_{\substack{k,\ell,n \\ k+\ell+n=0}}
    (e^{ik \cdot z} - 1)(e^{in \cdot z} - 1)\,
    \widehat{u}_i(k)\,\widehat{u}_j(\ell)\,\widehat{\zeta}(n).
\]

The second term $(i \leftrightarrow j)$ has the same form with
exchanged indices. Setting $\ell = -k-n$ gives
\begin{gather*}
  \int_{\mathbb{T}^d} \bigl[
    (u_i^z - u_i)\,u_j - (u_j^z - u_j)\,u_i
  \bigr]\,(\zeta^z - \zeta)\, dx \\
  = (2\pi)^{-d/2} \sum_{k,n}
    (e^{ik \cdot z} - 1)(e^{in \cdot z} - 1)\,
    A_{ij}(k,n)\,\widehat{\zeta}(n).
\end{gather*}
Integration against $\frac{1}{2h}P_h^*(z)$ in $z$ and interchanging
summation and integration by virtue of the Fubini--Tonelli theorem yields
\[
  \frac{1}{2h} \int_{\mathbb{T}^d} P_h^*(z)\,
    (e^{ik \cdot z} - 1)(e^{in \cdot z} - 1)\, dz
  = \frac{1}{2h} \bigl(
    e^{-h|k+n|} - e^{-h|k|} - e^{-h|n|} + 1
  \bigr)
\]
for the untruncated kernel, and correspondingly the same expression with indicator
functions $\mathbf{1}_{\{|\cdot|\leq N\}}$ for the truncated one.
This gives $K_h$ and correspondingly $K_h^N$.

\end{proof}

The following identity relates $K_h$ to the kernel
$|k+n| - |k| - |n|$ in the limit $h \to 0$.

\begin{lemma}\label{lem:integral_identity}
For all $k,n \in \mathbb{R}^d$,
\[
  \int_{\mathbb{R}^d}
    \frac{e^{i(k+n) \cdot z} - e^{ik \cdot z}
      - e^{in \cdot z} + 1}{|z|^{d+1}}\, dz
  = -c_d^{-1}\bigl(|k+n| - |k| - |n|\bigr).
\]
\end{lemma}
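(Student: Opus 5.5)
The plan is to reduce the identity to the classical one-variable formula
\[
  \int_{\mathbb{R}^d} \frac{1-\cos(\xi\cdot z)}{|z|^{d+1}}\,dz = c_d^{-1}\,|\xi| ,
  \qquad \xi\in\mathbb{R}^d ,
\]
which is the standard symbol computation for the half-Laplacian with the normalization $c_d=\Gamma(\frac{d+1}{2})/\pi^{(d+1)/2}$. This normalization is the one for which $E(u)=\tfrac12\|(-\Delta)^{1/4}u\|_{L^2}^2$ in the introduction.

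First we check absolute integrability of the integrand in the statement. The numerator factors as
\[
  e^{i(k+n)\cdot z}-e^{ik\cdot z}-e^{in\cdot z}+1=(e^{ik\cdot z}-1)(e^{in\cdot z}-1).
\]
This product is $O(|k||n||z|^2)$ near $z=0$ and bounded at infinity. Since $|z|^{2-d-1}$ is integrable near $0$ and $|z|^{-d-1}$ is integrable at infinity, the integral converges absolutely, and we may split it only after regrouping.

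Next we write
\[
  (e^{ik\cdot z}-1)(e^{in\cdot z}-1)
  = \bigl(e^{i(k+n)\cdot z}-1\bigr)-\bigl(e^{ik\cdot z}-1\bigr)-\bigl(e^{in\cdot z}-1\bigr).
\]
Each single term $(e^{i\xi\cdot z}-1)/|z|^{d+1}$ is not absolutely integrable near $0$, since it is only $O(|z|^{-d})$. To handle this, we integrate over the symmetric annuli $\{\varepsilon<|z|<R\}$. There the odd parts $i\sin(\xi\cdot z)$ vanish by the symmetry $z\mapsto -z$, so each term becomes $-\int_{\varepsilon<|z|<R}(1-\cos(\xi\cdot z))|z|^{-d-1}\,dz$. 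Letting $\varepsilon\to0$ and $R\to\infty$, dominated convergence for the product on the left and monotone convergence for each cosine term give
\[
  \int_{\mathbb{R}^d}\frac{(e^{ik\cdot z}-1)(e^{in\cdot z}-1)}{|z|^{d+1}}\,dz
  = -c_d^{-1}\bigl(|k+n|-|k|-|n|\bigr).
\]
This is the claim.

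It remains to prove the displayed formula. By rotation invariance and the scaling $z\mapsto z/|\xi|$, the integral equals $|\xi|\,I_d$ with $I_d=\int_{\mathbb{R}^d}(1-\cos z_1)|z|^{-d-1}\,dz$. The main obstacle is identifying $I_d=c_d^{-1}$. We plan to avoid a direct computation and instead read the constant off from the Poisson kernel. Since $\widehat{p_h}(\xi)=e^{-h|\xi|}$ on $\mathbb{R}^d$ and $\int p_h=1$,
\[
  \frac{1-e^{-h|\xi|}}{h}=\int_{\mathbb{R}^d}\frac{p_h(z)}{h}\bigl(1-\cos(\xi\cdot z)\bigr)\,dz .
\]
As $h\to0$ the left side tends to $|\xi|$. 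On the right, $p_h(z)/h=c_d(h^2+|z|^2)^{-(d+1)/2}$ increases monotonically to $c_d|z|^{-d-1}$, so monotone convergence gives $c_d I_d=1$. This also confirms the normalization stated in the introduction. Alternatively, one can compute $I_d$ via polar coordinates and Gamma-function identities, but the Poisson-kernel route is shorter and uses only facts from Section~\ref{sec:prelim}.
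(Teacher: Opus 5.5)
Your proof is correct. Its first half matches the paper: you prove absolute convergence, reduce via $z\mapsto -z$ to $J(\xi)=\int_{\R^d}(1-\cos(\xi\cdot z))|z|^{-d-1}\,dz$, and use rotation and scaling to get $J(\xi)=|\xi|\,J(e_1)$. The paper symmetrizes the whole, absolutely integrable integrand first. After that each piece $(1-\cos(\xi\cdot z))|z|^{-d-1}=O(|z|^{1-d})$ is already integrable, so your annulus truncation is valid but not needed.

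The real difference is how you identify $J(e_1)=c_d^{-1}$. The paper computes it directly: it slices $z=(t,w)$, evaluates $\int_{\R^{d-1}}(t^2+|w|^2)^{-(d+1)/2}\,dw=\pi^{(d-1)/2}\Gamma(\tfrac{d+1}{2})^{-1}t^{-2}$, and uses $\int_{\R}(1-\cos t)\,t^{-2}\,dt=\pi$. You instead read the constant off from $\widehat{p_h}(\xi)=e^{-h|\xi|}$, using monotone convergence of $p_h(z)/h\uparrow c_d|z|^{-d-1}$.

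What your route buys:
\begin{itemize}
\item It is shorter, and it shows why the same $c_d$ appears both in $p_h$ and in $E$. This is exactly the formal limit $P_h(z)/h\to c_d|z|^{-d-1}$ the paper uses as motivation.
\item It does not need finiteness of $J$ in advance.
\item If you use the Fourier transform on all of $\R^d$, it gives $c_d J(\xi)=|\xi|$ for every $\xi$ at once, so the rotation/scaling step becomes redundant.
\end{itemize}

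The price is that the Gamma-function work is outsourced to the classical Fourier transform of the Poisson kernel. The paper only asserts this fact, implicitly, by saying that $P_h$ is the periodization of $p_h$. By Poisson summation that gives $\widehat{p_h}(k)=e^{-h|k|}$ only for $k\in\Z^d$. This is still enough for you, since after scaling only $\xi=e_1$ is needed. The paper's computation, by contrast, is self-contained.
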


\begin{proof}
The integral converges absolutely.

By symmetry
$\int_{\R^d} f(z)\,dz = \frac{1}{2}\int_{\R^d}(f(z)+f(-z))\,dz$ and
$e^{-i\theta} = \overline{e^{i\theta}}$, the left-hand side equals
\[
  \int_{\mathbb{R}^d}
    \frac{\cos((k+n) \cdot z) - \cos(k \cdot z)
      - \cos(n \cdot z) + 1}{|z|^{d+1}}\, dz.
\]
Define
\[
  J(\xi) := \int_{\mathbb{R}^d}
    \frac{1 - \cos(\xi \cdot z)}{|z|^{d+1}}\, dz,
\]
so that the integral equals $-J(k+n) + J(k) + J(n)$.
For any orthogonal matrix $R$, the substitution $z \mapsto R^\top z$
gives $J(R\xi) = J(\xi)$, so $J$ depends only on $|\xi|$.
For $\lambda > 0$, the substitution $u = \lambda z$ gives
\[
  J(\lambda\xi)
  = \int_{\mathbb{R}^d}
    \frac{1 - \cos(\xi \cdot u)}{|u|^{d+1}}\,\lambda\, du
  = \lambda\, J(\xi),
\]
hence $J(\xi) = J(e_1)\,|\xi|$.

It remains to compute $J(e_1)$. Writing $z = (t,w) \in
\mathbb{R} \times \mathbb{R}^{d-1}$,
\[
  J(e_1) = \int_{\mathbb{R}} \int_{\mathbb{R}^{d-1}}
    \frac{1 - \cos t }{(t^2 + |w|^2)^{(d+1)/2}}\, dw\, dt.
\]
Calculating
\[
  \int_{\mathbb{R}^{d-1}}
    \frac{dw}{(t^2 + |w|^2)^{(d+1)/2}}
  = \frac{\pi^{(d-1)/2}}{\Gamma(\frac{d+1}{2})}\,\frac{1}{t^2},
\]
it follows that
\[
  J(e_1) = \frac{\pi^{(d-1)/2}}{\Gamma(\frac{d+1}{2})}
    \int_{\mathbb{R}} \frac{1 - \cos t }{t^2}\, dt
    = \frac{\pi^{(d+1)/2}}{\Gamma(\frac{d+1}{2})} = c_d^{-1}.
  \qedhere
\]
\end{proof}

The fractional gradient term also
admits a Fourier representation.

\begin{lemma}\label{lem:d12-fourier}
For measurable $u : \mathbb{T}^d \to \mathbb{S}^m$,
$\zeta \in C^\infty(\mathbb{T}^d)$, and
$i,j \in \{1,\dots,m{+}1\}$,
\begin{gather*}
  \int_{\mathbb{T}^d} \Big(
    \langle d_{\frac{1}{2}}(P_h^* * u_i), d_{\frac{1}{2}}\zeta
      \rangle_{\mathrm{od}}\, u_j
    - \langle d_{\frac{1}{2}}(P_h^* * u_j), d_{\frac{1}{2}}\zeta
      \rangle_{\mathrm{od}}\, u_i
  \Big)\, dx \\
  = -\frac{1}{2}
    \sum_{k,n \in \mathbb{Z}^d}
    \widehat{P_h^*}(k)\,
    A_{ij}(k,n)\,\widehat{\zeta}(n)\,
    \bigl(|k+n| - |k| - |n|\bigr).
\end{gather*}
\end{lemma}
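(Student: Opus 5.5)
The plan is to compute the left-hand side directly in Fourier variables, the same way as in Lemma~\ref{lem:fourier-rep}, and then evaluate the resulting $z$-integral with Lemma~\ref{lem:integral_identity}.

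\textbf{Step 1: write out the off-diagonal product.} Set $w := P_h^* * u_i$. By the definition of $d_{\frac12}$,
\[
  \langle d_{\frac12} w, d_{\frac12}\zeta\rangle_{\mathrm{od}}(x)
  = \frac{c_d}{2}\int_{\R^d} \frac{(w(x)-w(y))(\zeta(x)-\zeta(y))}{|x-y|^{d+1}}\,dy .
\]
Since $w$ and $\zeta$ are $2\pi$-periodic, we substitute $y = x+z$ and view the functions as periodic functions on $\R^d$. This gives
\[
  \frac{c_d}{2}\int_{\R^d}\frac{(w^z-w)(\zeta^z-\zeta)}{|z|^{d+1}}\,dz .
\]
We multiply by $u_j$, integrate over $\T^d$, and subtract the same expression with $i\leftrightarrow j$.

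\textbf{Step 2: Fourier expansion.} We expand $w$ with coefficients $\widehat{P_h^*}(k)\,\widehat u_i(k)\,(2\pi)^{d/2}$; the factor depends on the convolution normalisation and gives $\widehat{w}(k)=e^{-h|k|}\mathbf 1\,\widehat u_i(k)$. We expand $u_j$ with $\ell=-k-n$ and $\zeta$ with index $n$, exactly as in the proof of Lemma~\ref{lem:fourier-rep}. For fixed $z$, the $x$-integral becomes
\[
  (2\pi)^{-d/2}\sum_{k,n}(e^{ik\cdot z}-1)(e^{in\cdot z}-1)\,\widehat w(k)\,\widehat u_j(-k-n)\,\widehat\zeta(n).
\]
After antisymmetrization in $i,j$, the coefficient becomes $A_{ij}(k,n)$ weighted by the common multiplier. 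This works because $P_h^*$ acts on the $k$-slot in both terms of $A_{ij}$: in the swapped term, $P_h^*$ sits on $u_j$ at frequency $k$.

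\textbf{Step 3: the $z$-integral.} Expanding the product,
\[
  (e^{ik\cdot z}-1)(e^{in\cdot z}-1) = e^{i(k+n)\cdot z}-e^{ik\cdot z}-e^{in\cdot z}+1 .
\]
Lemma~\ref{lem:integral_identity} then gives
\[
  \int_{\R^d}\frac{(e^{ik\cdot z}-1)(e^{in\cdot z}-1)}{|z|^{d+1}}\,dz = -c_d^{-1}\bigl(|k+n|-|k|-|n|\bigr).
\]
Multiplying by $c_d/2$ produces the factor $-\tfrac12$. Collecting the normalization constants, the $(2\pi)^{\pm d/2}$ factors coming from $\widehat{P_h^*}$ and from the triple product cancel, which yields the stated formula.

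\textbf{Main obstacle: justifying the interchange of $\int_{\R^d}dz$, $\int_{\T^d}dx$ and the double sum.} The integrand is not absolutely integrable termwise in any naive sense. I would handle this as follows.
\begin{itemize}
\item On the $w$ side, $w=P_h^**u_i$ is smooth when $h>0$: its coefficients decay like $e^{-h|k|}$, or vanish for $|k|>N$. On the $\zeta$ side, $\widehat\zeta$ decays rapidly.
\item The bound $|(e^{ik\cdot z}-1)(e^{in\cdot z}-1)|\le \min(|k||z|,2)\min(|n||z|,2)$ shows that
\[
  \int_{\R^d}\frac{|(e^{ik\cdot z}-1)(e^{in\cdot z}-1)|}{|z|^{d+1}}\,dz \lesssim (1+|k|)(1+|n|).
\]
\item Since $|\widehat u_j(-k-n)|\le\|u\|_{L^2}$, the total double sum is dominated by
\[
  \sum_{k,n} e^{-h|k|}(1+|k|)\,|\widehat u_i(k)|\;(1+|n|)\,|\widehat\zeta(n)| < \infty .
\]
In the truncated case the $k$-sum is finite.
\end{itemize}
Fubini--Tonelli therefore applies. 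The $x$-integral at fixed $z$ is justified by $L^2$ convergence of the series for $u_j$ together with uniform convergence of the series for $w$ and $\zeta$.
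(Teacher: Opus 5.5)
Your proposal is correct and takes essentially the same route as the paper. You rewrite the off-diagonal product as a $z$-integral of $(w^z-w)(\zeta^z-\zeta)$, expand in Fourier series at fixed $z$, and antisymmetrize to get $\widehat{P_h^*}(k)A_{ij}(k,n)$; you then swap the $z$-integral with the double sum and evaluate with Lemma~\ref{lem:integral_identity}, and your domination bound $\int_{\R^d}|(e^{ik\cdot z}-1)(e^{in\cdot z}-1)|\,|z|^{-d-1}\,dz \lesssim (1+|k|)(1+|n|)$ makes explicit the Fubini--Tonelli step that the paper only asserts.
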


\begin{proof}
Inserting the Fourier series and interchanging limits by $L^2$
and $L^\infty$ convergence yields
\begin{align*}
  &\int_{\mathbb{T}^d}
    \langle d_{\frac{1}{2}}(P_h^* * u_i), d_{\frac{1}{2}}\zeta
    \rangle_{\mathrm{od}}\, u_j\, dx \\
  &= \frac{c_d}{2}
    \int_{\mathbb{R}^d} \frac{dz}{|z|^{d+1}}
    \sum_{k,n \in \mathbb{Z}^d}
    \widehat{P_h^*}(k)\,
    \widehat{u}_i(k)\,\widehat{\zeta}(n)\,
    \widehat{u}_j(-k-n)\,
    \bigl(e^{i(k+n) \cdot z} - e^{ik \cdot z}
      - e^{in \cdot z} + 1\bigr).
\end{align*}
Subtracting the $(i \leftrightarrow j)$ term, applying the
Fubini--Tonelli theorem, and using Lemma~\ref{lem:integral_identity} gives
the result.

\end{proof}

We can now prove that the approximative antisymmetric term converges
to its continuous counterpart.

\begin{theorem}\label{thm:limit_antisym}
Let $\zeta \in C^\infty(\mathbb{T}^d)$ and
$i,j \in \{1,\dots,m{+}1\}$. Then, as $h \to 0$,
\begin{gather*}
  \sup_{t \in (0,T)} \left|
  \frac{1}{2h} \int_{\mathbb{T}^d} P_h^*(z)
    \int_{\mathbb{T}^d} \bigl[
      ((u_i^h)^z - u_i^h)\,u_j^h - ((u_j^h)^z - u_j^h)\,u_i^h
    \bigr]\,(\zeta^z - \zeta)\, dx\, dz \right. \\
  \left. - \int_{\mathbb{T}^d} \bigl(
    \langle d_{\frac{1}{2}}(P_h^* * u_i^h), d_{\frac{1}{2}}\zeta
      \rangle_{\mathrm{od}}\, u_j^h
    - \langle d_{\frac{1}{2}}(P_h^* * u_j^h), d_{\frac{1}{2}}\zeta
      \rangle_{\mathrm{od}}\, u_i^h
  \bigr)\, dx \right|
  = O(\sqrt{h}).
\end{gather*}
\end{theorem}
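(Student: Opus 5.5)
The plan is to compare the two expressions on the Fourier side, using Lemma~\ref{lem:fourier-rep} for the first and Lemma~\ref{lem:d12-fourier} for the second. Since $\widehat{P_h^*}(k)=(2\pi)^{-d/2}e^{-h|k|}\mathbf 1_{\{|k|\le N\}}$ (with no indicator in the untruncated case), the difference at a fixed time is
\[
  \frac{1}{2(2\pi)^{d/2}}\sum_{k,n}A_{ij}(k,n)\,\widehat\zeta(n)\,D_h^*(k,n),
\]
where $D_h^*(k,n):=K_h^*(k,n)+e^{-h|k|}\mathbf 1_{\{|k|\le N\}}\bigl(|k+n|-|k|-|n|\bigr)$. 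The proof then reduces to two ingredients: a pointwise bound on $D_h^*$ that is polynomial in $|n|$ and carries a factor $\sqrt h$, and a bound on the $u$-dependent sums that is uniform in $t$ and comes from the energy.

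\emph{Energy control.} We have the elementary bound $1-e^{-x}\ge (1-e^{-1})\min(x,1)$ for $x\ge0$. Put $w_h(k):=\min(|k|,1/h)$. By Plancherel, using Lemma~\ref{lem:EmN-identity} and \eqref{eq:energy_dissipation}, we get for all $t$
\[
\sum_k w_h(k)|\widehat u^h(k,t)|^2\le C\,E_h(u^h(t))\le C\,E_h^*(u^h(t))\le C\,E_h^*(u^0)\le C\,E(u^0),
\]
where the last step uses Lemma~\ref{lem:Eh_limit} and Lemma~\ref{lem:EhN_limit}. In particular, the high modes satisfy $\sum_{|k|\ge 1/h}|\widehat u^h(k)|^2\le C h E(u^0)$. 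Together with $\|u^h\|_{L^2}^2=(2\pi)^d$, this is all we use about $u^h$; it is why the estimate is uniform in $t$.

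\emph{Untruncated kernel.} Write $a=|k+n|$, $b=|k|$, $c=|n|$, so that $|a-b|\le c$. Split
\[
D_h=\Bigl[\tfrac{1-e^{-hc}}{h}-c\Bigr]+c\,(1-e^{-hb})+e^{-hb}\Bigl[\tfrac{e^{-h(a-b)}-1}{h}+(a-b)\Bigr].
\]
The three pieces are bounded as follows.
\begin{itemize}
\item The first is at most $hc^2$.
\item The second is at most $c\min(1,hb)=c\,h\,w_h(k)\le c\sqrt{h\,w_h(k)}$, since $h\,w_h(k)\le1$.
\item For the third, put $y=h(b-a)$. It equals $h^{-1}e^{-hb}(e^{y}-1-y)$. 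If $y\le0$ this is at most $y^2/(2h)$. If $y>0$, then $e^{-hb}e^{y}=e^{-ha}\le1$, so it is again at most $y^2/(2h)$. In both cases the bound is $\le hc^2/2$.
\end{itemize}
Hence $|D_h|\le C\bigl(h|n|^2+\sqrt h\,|n|\,w_h(k)^{1/2}\bigr)$. Inserting this and applying Cauchy--Schwarz in $k$ for each fixed $n$ gives
\[
\sum_k|\widehat u_i(k)|\,w_h(k)^{1/2}|\widehat u_j(-k-n)|\le C\,E(u^0)^{1/2}.
\]
Summing over $n$ against $(1+|n|^2)|\widehat\zeta(n)|$, which is summable because $\zeta$ is smooth, yields the bound $O(\sqrt h)$ uniformly in $t$.

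\emph{Truncated kernel.} This is the step I expect to be the main obstacle. Here $D_h^N-D_h$ consists of the terms
\[
h^{-1}\bigl(-e^{-ha}\mathbf 1_{\{a>N\}}+e^{-hb}\mathbf 1_{\{b>N\}}+e^{-hc}\mathbf 1_{\{c>N\}}\bigr)-e^{-hb}\mathbf 1_{\{b>N\}}(a-b-c).
\]
A naive bound $e^{-hN}/h$ is not good enough when $d=1$. The plan is to split into cases.
\begin{itemize}
\item Terms with $c>N/2$ are negligible because of the rapid decay of $\widehat\zeta$; here $N\gg1/h$.
\item If $c\le N/2$ and one of $a,b$ exceeds $N$, then both $|k|$ and $|k+n|$ are larger than $N/2\ge 1/h$. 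So both factors $\widehat u_i(k)$ and $\widehat u_j(-k-n)$ are high modes.
\end{itemize}
In the second case, Cauchy--Schwarz gives a factor $\bigl(\sum_{|k|\ge1/h}|\widehat u|^2\bigr)\le ChE(u^0)$. Multiplied by the prefactor $h^{-1}e^{-hN/2}$ (bounded by $e^{-h(N-c)}/h$ with room to spare), this gives a contribution $O(e^{-hN/2})$, which is $o(\sqrt h)$ under $hN-d(d+1)\log(1/h)\to\infty$. Combining this with the untruncated estimate proves the claim for both schemes.
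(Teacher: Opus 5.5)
Your argument is correct. For the untruncated kernel it is essentially the paper's proof. The paper likewise splits the Fourier difference kernel $K_h(k,n)+e^{-h|k|}(|k+n|-|k|-|n|)$ into pieces bounded by $h|n|^2$ and by $|n|(1-e^{-h|k|})$; these are its $I_1,I_2,I_3$, the last handled via the auxiliary function $x\mapsto x-(1-e^{-hx})/h$. It controls the latter pieces by $\sum_k|\widehat{u^h}(k)|^2(1-e^{-h|k|})^2\le 4hE_h(u^h)$ together with Young's convolution inequality. Your weight $\min(|k|,1/h)$ combined with Cauchy--Schwarz in $k$ is the same mechanism, and your three-piece split with a single second-order Taylor bound for the last piece is a slightly more compact packaging.

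The real difference is the truncated case. The paper simply bounds every indicator term by $e^{-hN}$ and obtains $O(h^{-1}e^{-hN})$, using only $\|u^h\|_{L^2}^2$ and $\sum_n(1+|n|)|\widehat\zeta(n)|$. That already suffices. The hypothesis gives $e^{-hN}=o(h^{d(d+1)})$ with $d(d+1)\ge2$, so $h^{-1}e^{-hN}=o(h)$ also for $d=1$. Your remark that the naive bound fails when $d=1$ is therefore mistaken.

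Your case split is nonetheless valid. What it buys is a weaker requirement on $N$. By forcing both $\widehat u_i(k)$ and $\widehat u_j(-k-n)$ into the range $|\cdot|\ge1/h$, you gain a factor $h$ from the energy, so you need only $e^{-hN}=O(\sqrt h)$ (with $hN\ge2$) rather than $e^{-hN}=O(h^{3/2})$. The paper does not need this gain, since it imposes the much stronger condition anyway to get positivity of $P_h^N$ (Lemma~\ref{lem:PhN_positivity}).

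A small point: in your high-mode case the prefactor is in fact at most $h^{-1}e^{-hN}+2|n|e^{-hN}$, because each indicator sits on the exponential with the same argument. Your wording with $e^{-hN/2}$ and $e^{-h(N-c)}$ is muddled but harmless.
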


\begin{proof}
All bounds below use only $\|u^h\|_{L^2}^2 = (2\pi)^d$ and
\[
  E_h^*(u^h) \;\leq\; E_h^*(u^0) \;\leq\; 2\,E(u^0),
\]
the first by \eqref{eq:energy_dissipation}, the second by
Lemma~\ref{lem:Eh_limit} in the untruncated and
Lemma~\ref{lem:EhN_limit} in the truncated case. Neither quantity
depends on $t$, $h$ or $N$, so the estimates hold uniformly in $t$.
By Lemmas~\ref{lem:fourier-rep} and~\ref{lem:d12-fourier}, applied at
fixed $t$ with $A_{ij}$ formed from the Fourier coefficients of $u^h$,
the difference equals
\[
  \frac{1}{2(2\pi)^{d/2}}
  \sum_{k,n \in \mathbb{Z}^d}
  A_{ij}(k,n)\,\widehat{\zeta}(n)\,
  \Bigl(
    K_h^*(k,n) + (2\pi)^{d/2}\,\widehat{P_h^*}(k)\,
    \bigl(|k+n| - |k| - |n|\bigr)
  \Bigr).
\]

\emph{Untruncated case.}
For $P_h^* = P_h$, the expression in parentheses becomes
\[
  \frac{e^{-h|k+n|} - e^{-h|k|} - e^{-h|n|} + 1}{h}
  + e^{-h|k|}\bigl(|k+n| - |k| - |n|\bigr).
\]
We decompose this into three parts:
\begin{align*}
  I_1(k,n) &= (|k+n| - |k|)(e^{-h|k|} - 1), \\
  I_2(k,n) &= \frac{1 - e^{-h|n|}}{h} - |n|\,e^{-h|k|}, \\
  I_3(k,n) &= (|k+n| - |k|)
    + \frac{e^{-h|k+n|} - 1}{h} + \frac{1 - e^{-h|k|}}{h}.
\end{align*}

\emph{Estimate of $I_1$.}
Using $\bigl||k+n| - |k|\bigr| \leq |n|$ and
$|A_{ij}(k,n)| \leq 2|\widehat{u^h}(k)|\,|\widehat{u^h}(k+n)|$,
\begin{align*}
  &\Bigl|\sum_{k,n} A_{ij}(k,n)\,
    (|k+n| - |k|)(e^{-h|k|} - 1)\,\widehat{\zeta}(n)\Bigr| \\
  &\leq 2 \sum_{k,n}
    |\widehat{u^h}(k)|\,(1 - e^{-h|k|})\,
    |\widehat{u^h}(k+n)|\,|n|\,|\widehat{\zeta}(n)|.
\end{align*}

By Young's convolution inequality,
\[
  \leq 2\Bigl(\sum_n |n|\,|\widehat{\zeta}(n)|\Bigr)
  \Bigl(\sum_k |\widehat{u^h}(k)|^2(1 - e^{-h|k|})^2\Bigr)^{1/2}
  \|u^h\|_{L^2}.
\]
Since $(1 - e^{-h|k|})^2 \leq 2h\,\frac{1 - e^{-h|k|}}{h}$,
we have
\[
  \sum_k |\widehat{u^h}(k)|^2(1 - e^{-h|k|})^2
  \leq 2h \sum_k |\widehat{u^h}(k)|^2\,
    \frac{1 - e^{-h|k|}}{h}
  = 4h\, E_h(u^h),
\]
which is bounded by $4h\, E_h(u^0)$ along the scheme by the
energy-dissipation estimate \eqref{eq:energy_dissipation}.
Hence this term is $O(\sqrt{h})$.

\emph{Estimate of $I_2$.}
Write
\[
  I_2(k,n)
  = \Bigl(\frac{1 - e^{-h|n|}}{h} - |n|\Bigr)
    + |n|\,\bigl(1 - e^{-h|k|}\bigr).
\]
By Taylor expansion, $|1 - e^{-t} - t| \leq \frac{1}{2}t^2$, so the first
bracket is bounded by $\frac{h}{2}|n|^2$ and contributes, by Young's
convolution inequality,
\[
  \Bigl|\sum_{k,n} A_{ij}(k,n)\,
    \Bigl(\frac{1 - e^{-h|n|}}{h} - |n|\Bigr)\,
    \widehat{\zeta}(n)\Bigr|
  \leq h\,\|u^h\|_{L^2}^2\,
    \sum_n |n|^2\,|\widehat{\zeta}(n)|
  = O(h).
\]
The second part is estimated exactly as $I_1$.

\emph{Estimate of $I_3$.}
Define $w_h(x) := x - \frac{1 - e^{-hx}}{h}$, so that
$I_3(k,n) = w_h(|k+n|) - w_h(|k|)$.
Since $w_h'(x) = 1 - e^{-hx}$ and $w_h''(x) = he^{-hx}$,
Taylor expansion and the reverse triangle inequality give
\[
  |w_h(|k+n|) - w_h(|k|)|
  \leq (1 - e^{-h|k|})\,\bigl||k+n| - |k|\bigr|
    + h\,\bigl||k+n| - |k|\bigr|^2.
\]
The first part is estimated as $I_1$ and the second as $I_2$
(using $\bigl||k+n| - |k|\bigr| \leq |n|$).
Hence the total contribution is $O(\sqrt{h})$.

\emph{Truncated case.}
For $P_h^* = P_h^N$, we decompose $K_h^N$ by separating the
indicator functions from the untruncated kernel $K_h$. Since
$E_h(u^h) \leq E_h^N(u^h)$ (as $P_h^N$ has fewer Fourier modes),
the untruncated estimates for $I_1$, $I_2$, $I_3$ remain valid:
the bound $E_h(u^h) \leq E_h^N(u^h) \leq E_h^N(u^0)$ ensures
that the approximative energy controlling the untruncated proof is
still bounded along the truncated scheme.

It therefore suffices to estimate the additional contribution
from the indicator functions, which reduces to
\begin{gather*}
  \frac{1}{2(2\pi)^{d/2}h}
  \sum_{k,n} A_{ij}(k,n)\,\widehat{\zeta}(n)\,\bigl(
    \mathbf{1}_{\{|k+n| > N\}} e^{-h|k+n|}
    - \mathbf{1}_{\{|k| > N\}} e^{-h|k|}
    - \mathbf{1}_{\{|n| > N\}} e^{-h|n|}
  \bigr) \\
  + \frac{1}{2(2\pi)^{d/2}}
  \sum_{k,n} A_{ij}(k,n)\,\widehat{\zeta}(n)\,
    \mathbf{1}_{\{|k| > N\}} e^{-h|k|}\,
    \bigl(|k+n| - |k| - |n|\bigr).
\end{gather*}
For the first sum, each exponential with indicator is bounded by
$e^{-hN}$. Using $|A_{ij}(k,n)| \leq 2|\widehat{u^h}(k)||\widehat{u^h}(k+n)|$
and Young's convolution inequality,
\[
  \sum_{k,n} |A_{ij}(k,n)|\,|\widehat{\zeta}(n)|
  \;\leq\; 2\,\|u^h\|_{L^2}^2\,\sum_n |\widehat{\zeta}(n)|,
\]
which is finite since $\zeta \in C^\infty(\T^d)$. Hence this term is
$O(h^{-1}e^{-hN})$.

For the second sum, bounding
$e^{-h|k|}\mathbf{1}_{\{|k|>N\}} \leq e^{-hN}$, using
$\bigl||k+n| - |k| - |n|\bigr| \leq 2|n|$, and again Young's convolution inequality,
\[
  \sum_{k,n} |A_{ij}(k,n)|\,|\widehat{\zeta}(n)|\,|n|\,
    e^{-h|k|}\,\mathbf{1}_{\{|k|>N\}}
  \;\leq\; 2\,e^{-hN}\,\|u^h\|_{L^2}^2\,\sum_n |n|\,|\widehat{\zeta}(n)|,
\]
so this term is $O(e^{-hN})$.

The condition $hN - d(d+1)\log(1/h) \to \infty$ imposed in
Theorem~\ref{thm:main} (inherited from
Lemma~\ref{lem:PhN_positivity} via positivity of $P_h^N$) gives
$e^{-hN} = o\bigl(h^{d(d+1)}\bigr)$, hence $h^{-1}e^{-hN} = o(h)$ for
every $d \geq 1$. Both extra terms are therefore $o(h)$ and the total is
again $O(\sqrt{h})$.
\end{proof}

\subsection{Proof of Theorem~\ref{thm:main}}

\begin{proof}[Proof of Theorem~\ref{thm:main}]
By the energy-dissipation estimate \eqref{eq:energy_dissipation}
and the estimate \eqref{eq:HEstimate}, the sequence
$P_{h/2}^* * u^h$ is bounded in
$L^\infty(0,T;\dot{H}^{1/2}(\mathbb{T}^d))$ with
$\partial_t^h(P_{h/2}^* * u^h)$ bounded in
$L^2(\mathbb{T}^d \times (0,T))$.
A direct calculation shows equicontinuity in $L^2$:
\[
  \sup_h \|P_{h/2}^* * u^h(\cdot + s, \cdot + y)
    - P_{h/2}^* * u^h\|_{L^2(\mathbb{T}^d\times(0,T))} \to 0
  \quad \text{as } s,y \to 0.
\]

This yields a strongly convergent subsequence
$P_{h_n/2}^* * u^{h_n} \to u$ in $L^2$. Moreover by Jensen's inequality and the positivity of $P^*_h$ we obtain
\begin{align*}
  \|u^{h_n}- P_{h_n}^* * u^{h_n}\|_{L^2}^2
  &= \int_{\mathbb{T}^d}\left(\int_{\mathbb{T}^d} P^*_{h_n}(z)(u^{h_n}(x) - u^{h_n}(x-z)) \,dz\right)^2\, dx\\
  &= \int_{\mathbb{T}^d}P^*_{h_n}(z)\int_{\mathbb{T}^d} \left|u^{h_n}(x) - u^{h_n}(x-z)\right|^2 \,dx\, dz
   \;=\; 4h\,E_{h_n}^*(u^{h_n}) \;\leq\; 4{h_n}\,E_{h_n}^*(u^0) \;\to\; 0 ,
\end{align*}
Hence $u^{h_n} \to u$ in $L^2$ as well.

By Theorem~\ref{thm:limit_antisym}, the approximative conservation
law (Lemma~\ref{lem:approx-EL}) passes to the limit,
and noting that the missing time integral $\int_0^h$ vanishes
(since $u^h = u^0 \in H^{1/2}$ on $[0,h)$), we obtain via
Theorem~\ref{thm:conservation_law_flow} that $u$ is a weak solution
of the half-harmonic map heat flow.

For the truncated scheme with $P_h^N$ in place of $P_h$, the
same argument applies under the condition
$hN - d(d+1)\log(1/h) \to \infty$.

Since $u \in H^1((0,T);L^2(\mathbb{T}^d)) \subset
C([0,T];L^2(\mathbb{T}^d))$, we identify $u(0) = u^0$ by
testing the time derivative with
$\varphi(t,x) = \zeta(t)\,\psi(x)$, where
$\zeta \in C_c^\infty([-\varepsilon,T))$ with $\zeta \equiv 1$
on $[-\varepsilon,0]$ and $\psi \in C^\infty(\mathbb{T}^d)$.
Integration by parts in time gives
\[
  \int_0^T \!\int_{\mathbb{T}^d} \partial_t u\,\varphi\, dx\, dt
  = -\int_0^T \!\int_{\mathbb{T}^d}
    u\,\psi\,\partial_t\zeta\, dx\, dt
  - \int_{\mathbb{T}^d} u(0)\,\psi\, dx.
\]
Using $\partial_t^h(P_{h/2}^* * u^h)
\rightharpoonup \partial_t u$ in $L^2$, the left-hand side equals
\begin{gather*}
  \lim_{h \to 0} \int_0^T \!\int_{\mathbb{T}^d}
    \partial_t^h(P_{h/2}^* * u^h)\,\varphi\, dx\, dt \\
  = -\lim_{h \to 0} \int_0^T \!\int_{\mathbb{T}^d}
    P_{h/2}^* * u^h\,\psi\,\partial_t^{-h}\zeta\, dx\, dt
  - \lim_{h \to 0} \frac{1}{h}\int_0^h \!\int_{\mathbb{T}^d}
    P_{h/2}^* * u^h\,\psi\, dx.
\end{gather*}

By the strong convergence of $P_{h/2}^* * u^h$ in $L^2$
(for the truncated kernel, $P_h^N * u^0 \to u^0$ in $L^2$
by Lemma~\ref{lem:PhN-L2-approx}) and the fact
that $P_{h/2}^* * u^h(t) = P_{h/2}^* * u^0$ for $t \in [0,h)$, this
becomes
\[
  -\int_0^T \!\int_{\mathbb{T}^d}
    u\,\psi\,\partial_t\zeta\, dx\, dt
  - \int_{\mathbb{T}^d} u^0\,\psi\, dx.
\]
Comparing both expressions gives $u(0) = u^0$.

We upgrade the convergence to $H^{1/2}(\mathbb{T}^d)$.
Since $u^{h_n} \to u$ in $L^2(\mathbb{T}^d \times (0,T))$,
passing to a further subsequence (still denoted $h_n$),
$u^{h_n}(t) \to u(t)$ in $L^2(\mathbb{T}^d)$ for a.e.\ $t$.
For the untruncated scheme,
\begin{align*}
  E(u(t))
  &= \lim_{\ell \to 0} E_\ell(u(t))
   = \lim_{\ell \to 0} \lim_{n \to \infty} E_\ell(u^{h_n}(t)) \\
  &\leq \liminf_{n \to \infty} E_{h_n}(u^{h_n}(t))
   \leq \liminf_{n \to \infty} E_{h_n}(u^0)
   \leq E(u^0).
\end{align*}

For the truncated scheme, setting
$R(h,N,u) := \frac{1}{2h}\sum_{|k|>N} e^{-h|k|}|\widehat{u}(k)|^2$,
the Fourier identity \eqref{eq:EmN-identity} gives
\begin{align*}
  E(u(t))
  &= \lim_{\ell \to 0} \lim_{n \to \infty} E_\ell(u^{h_n}(t))
   \;\leq\; \liminf_{n \to \infty} E_{h_n}(u^{h_n}(t)) \\
  &= \liminf_{n \to \infty}
    \bigl[E_{h_n}^{N(h_n)}(u^{h_n}(t))
      - R(h_n, N(h_n), u^{h_n}(t))\bigr] \\
  &\leq \liminf_{n \to \infty}
    \bigl[E_{h_n}^{N(h_n)}(u^0)
      - R(h_n, N(h_n), u^{h_n}(t))\bigr] \\
  &= \liminf_{n \to \infty}
    \bigl[E_{h_n}(u^0) + R(h_n, N(h_n), u^0)
      - R(h_n, N(h_n), u^{h_n}(t))\bigr]
   \;\leq\; E(u^0),
\end{align*}
where both remainders satisfy
\[
  R(h_n, N(h_n), u)
  \leq \frac{e^{-h_n N(h_n)}}{2h_n}\,\|u\|_{L^2}^2
  = \frac{e^{-h_n N(h_n)}}{2h_n}\,(2\pi)^d
  \to 0,
\]
so the last step is Lemma~\ref{lem:Eh_limit}.

This holds for a.e.\ $t$ only, since $u^{h_n}(t) \to u(t)$ in
$L^2(\mathbb{T}^d)$ was obtained for a.e.\ $t$. It extends to every $t$:
since $H^{1/2}$ is reflexive,
$u \in L^\infty(0,T;H^{1/2}) \cap C([0,T];L^2)$ implies weak continuity of $u$
with values in $H^{1/2}$.
Given an arbitrary $t$, choose $t_j \to t$ among the times at which
$E(u(t_j)) \leq E(u^0)$ holds; those times are dense in
$(0,T)$, their complement being a null set. Then
$u(t_j) \rightharpoonup u(t)$ in $H^{1/2}$, and weak lower semicontinuity
gives
\[
  E(u(t)) \;\leq\; \liminf_j E(u(t_j)) \;\leq\; E(u^0).
\]

Since $u \in L^\infty(0,T;\,H^{1/2}(\mathbb{T}^d))$ and
$u(t) \to u^0$ in $L^2(\mathbb{T}^d)$, any sequence
$t_k \to 0$ admits a subsequence with
$u(t_{k_j}) \rightharpoonup u^0$ weakly in $H^{1/2}$.
By weak lower semicontinuity,
$E(u^0) \leq \liminf_{t \to 0} E(u(t))$.
Together, $E(u(t)) \to E(u^0)$, and
with the $L^2$-convergence this gives
$u(t) \to u^0$ in $H^{1/2}(\mathbb{T}^d)$.

\emph{Energy dissipation inequality.}
Since $u^h$, and with it $|P_h^* * u^h|$, is constant in time on each
$I_n = [nh,(n+1)h)$, we may choose one set per time step,
\[
  \Gamma_n := \bigl\{ x \in \mathbb{T}^d :\
    |P_h^* * u^n|(x) > \tfrac12 \bigr\} .
\]
Given $m = m(h) \in \mathbb{N}$, we group $m$ consecutive steps into
blocks: with $\ell := mh$, $J := \lceil T/\ell \rceil$ and
$J_j := [j\ell,(j+1)\ell) \cap [0,T)$ of length $|J_j| \leq \ell$ for
$j = 0,\dots,J-1$, we set
\begin{equation}\label{eq:blockcutoff}
  O_j := \bigcap_{n \,:\, nh \in J_j} \Gamma_n ,
  \qquad
  A_h := \bigcup_{j=0}^{J-1} O_j \times J_j.
\end{equation}

Let $\varphi \in C_c^\infty(\mathbb{T}^d \times (0,T))$, with $h$ small
enough that $\operatorname{supp}\varphi + [-h,h] \subset (0,T)$.
Splitting $u^h = P_{h/2}^* * u^h + (u^h - P_{h/2}^* * u^h)$, applying
discrete integration by parts in $t$ to the second part, and using
$\partial_t^{-h}(\mathbf{1}_{A_h}\varphi)
 = \mathbf{1}_{A_h}\,\partial_t^{-h}\varphi
 + \varphi(\cdot\,,t-h)\,\partial_t^{-h}\mathbf{1}_{A_h}$, we obtain
\[
  \int_0^T\!\!\int_{\mathbb{T}^d} \mathbf{1}_{A_h}\,\varphi\,\partial_t^h u^h\,dx\,dt
  \;=\; \int_0^T\!\!\int_{\mathbb{T}^d}
    \varphi\,\partial_t^h(P_{h/2}^* * u^h)\,dx\,dt
   \;-\; I_1(h) \;-\; I_2(h) \;-\; I_3(h),
\]
where
\begin{align*}
  I_1(h) &:= \int_0^T\!\!\int_{\mathbb{T}^d}
    (1-\mathbf{1}_{A_h})\,\varphi\,\partial_t^h(P_{h/2}^* * u^h)\,dx\,dt , \\
  I_2(h) &:= \int_0^T\!\!\int_{\mathbb{T}^d}
    \bigl(u^h - P_{h/2}^* * u^h\bigr)\,\mathbf{1}_{A_h}\,
    \partial_t^{-h}\varphi\,dx\,dt , \\
  I_3(h) &:= \int_0^T\!\!\int_{\mathbb{T}^d}
    \bigl(u^h - P_{h/2}^* * u^h\bigr)\,\varphi(\cdot\,,t-h)\,
    \partial_t^{-h}\mathbf{1}_{A_h}\,dx\,dt .
\end{align*}

By the Cauchy--Schwarz inequality and
$\|\partial_t^h(P_{h/2}^* * u^h)\|_{L^2} \leq (2E_h^*(u^0))^{1/2}$ from
Theorem~\ref{thm:energy-identity},
\[
  |I_1(h)|
  \;\leq\; \bigl\|(1-\mathbf{1}_{A_h})\,\varphi\bigr\|_{L^2}\,
    \bigl\|\partial_t^h(P_{h/2}^* * u^h)\bigr\|_{L^2}
  \;\leq\; \|\varphi\|_{L^\infty}\,|A_h^c|^{1/2}\,
    \bigl(2E_h^*(u^0)\bigr)^{1/2}.
\]

Likewise, by the Cauchy--Schwarz inequality, $|\mathbf{1}_{A_h}| \leq 1$ and
$\sup_t \|u^h(t) - P_{h/2}^* * u^h(t)\|_{L^2}
 \leq (2h\,E_h^*(u^0))^{1/2}$,
\[
  |I_2(h)|
  \;\leq\; \bigl\|u^h - P_{h/2}^* * u^h\bigr\|_{L^2}\,
    \bigl\|\mathbf{1}_{A_h}\,\partial_t^{-h}\varphi\bigr\|_{L^2}
  \;\leq\; \bigl(2T\,E_h^*(u^0)\bigr)^{1/2}
    \bigl(T\,|\mathbb{T}^d|\bigr)^{1/2}\,
    \|\partial_t\varphi\|_{L^\infty}\;h^{1/2}.
\]

For $I_3(h)$ the shift falls on $\mathbf{1}_{A_h}$. If $t$ and $t-h$ lie in the
same block then $\mathbf{1}_{A_h}(\cdot\,,t) = \mathbf{1}_{A_h}(\cdot\,,t-h)$, so for
$h \leq \ell$ the integrand vanishes outside the $J-1$ slabs
$\mathbb{T}^d \times [j\ell,\,j\ell+h)$, on which
$|\partial_t^{-h}\mathbf{1}_{A_h}| = \tfrac1h\,\mathbf{1}_{O_j \triangle O_{j-1}}$.
Using $|\varphi(\cdot\,,t-h)| \leq \|\varphi\|_{L^\infty}$,
\[
  |I_3(h)|
  \;\leq\; \frac{\|\varphi\|_{L^\infty}}{h}
    \sum_{j=1}^{J-1} \int_{j\ell}^{j\ell+h}\!\!
      \int_{O_j \triangle O_{j-1}}
      \bigl|u^h - P_{h/2}^* * u^h\bigr|\,dx\,dt .
\]
For fixed $t$, the Cauchy--Schwarz inequality against the characteristic function of
$O_j \triangle O_{j-1}$ gives
\begin{align*}
  \int_{O_j \triangle O_{j-1}}
    \bigl|u^h(t) - P_{h/2}^* * u^h(t)\bigr|\,dx
  \;\leq\; |O_j \triangle O_{j-1}|^{1/2}
    \bigl(2h\,E_h^*(u^0)\bigr)^{1/2},
\end{align*}
Hence
\begin{align*}
  |I_3(h)|
  \;\leq\; \|\varphi\|_{L^\infty}\bigl(2E_h^*(u^0)\bigr)^{1/2}\,h^{1/2}
    \sum_{j=1}^{J-1} |O_j \triangle O_{j-1}|^{1/2} .
\end{align*}

It remains to estimate the size of the sets $|O_j \triangle O_{j-1}|$. On $\Gamma_n^c$ we have
$1 - u^n\cdot(P_h^* * u^n) \geq 1 - |P_h^* * u^n| \geq \tfrac12$, and
the integrand is nonnegative, so
\[
  \tfrac12\,|\Gamma_n^c|
  \;\leq\; \int_{\mathbb{T}^d}
    \bigl(1 - u^n\cdot(P_h^* * u^n)\bigr)\,dx
  \;=\; 2h\,E_h^*(u^n)
  \;\leq\; 2h\,E_h^*(u^0) .
\]
Hence, with $\ell = mh$,
\[
  |O_j^c| \;=\; \Bigl|\bigcup_{n\,:\,nh \in J_j}\Gamma_n^c\Bigr|
  \;\leq\; \sum_{n\,:\,nh \in J_j}|\Gamma_n^c|
  \;\leq\; 4mh\,E_h^*(u^0)
  \;=\; 4\ell\,E_h^*(u^0),
\]
\[
  |O_j \triangle O_{j-1}| \;\leq\; |O_j^c| + |O_{j-1}^c|
  \;\leq\; 8\ell\,E_h^*(u^0),
\]
\[
  |A_h^c| \;=\; \sum_{j=0}^{J-1} |J_j|\,|O_j^c|
  \;\leq\; 4\ell\,E_h^*(u^0) \sum_{j=0}^{J-1} |J_j|
  \;=\; 4\,T\ell\,E_h^*(u^0) .
\]

Inserting the measure bounds,
\[
  |I_1(h)| \;\leq\; \|\varphi\|_{L^\infty}
    \bigl(4T\ell\,E_h^*(u^0)\bigr)^{1/2}
    \bigl(2E_h^*(u^0)\bigr)^{1/2}
  \;=\; 2\sqrt2\,E_h^*(u^0)\,\|\varphi\|_{L^\infty}\,(T\,mh)^{1/2},
\]
\begin{align*}
  |I_3(h)|
  &\;\leq\; \|\varphi\|_{L^\infty}\bigl(2E_h^*(u^0)\bigr)^{1/2}h^{1/2}
    \sum_{j=1}^{J-1}|O_j \triangle O_{j-1}|^{1/2} \\
  &\;\leq\; \|\varphi\|_{L^\infty}\bigl(2E_h^*(u^0)\bigr)^{1/2}h^{1/2}\,
    \frac{T}{\ell}\,\bigl(8\ell\,E_h^*(u^0)\bigr)^{1/2}
  \;=\; 4\,E_h^*(u^0)\,T\,\|\varphi\|_{L^\infty}\,m^{-1/2} .
\end{align*}
We choose $m = \lceil h^{-1/2}\rceil$, so that $m \to \infty$ and
$mh \to 0$; then $I_1(h), I_2(h), I_3(h) \to 0$ as $h \to 0$.

It remains to replace $\mathbf{1}_{A_h}\,\partial_t^h u^h$ by
$\mathbf{1}_{A_h}\,|P_h^* * u^h|^{1/2}\,\partial_t^h u^h$. Inserting
$1 = |P_h^* * u^h|^{-1/2}\,|P_h^* * u^h|^{1/2}$ and applying
the Cauchy--Schwarz inequality,
\begin{align*}
  \Bigl|\int_0^T\!\!\int_{\mathbb{T}^d}
    \mathbf{1}_{A_h}\bigl(1-|P_h^* * u^h|^{1/2}\bigr)\,
    \partial_t^h u^h\,\varphi\,dx\,dt\Bigr|
  \;\leq\;& \|\varphi\|_{L^\infty}
    \Bigl(\int_0^T\!\!\int_{\mathbb{T}^d} \mathbf{1}_{A_h}\,
      \frac{\bigl(1-|P_h^* * u^h|^{1/2}\bigr)^2}{|P_h^* * u^h|}
      \,dx\,dt\Bigr)^{1/2} \\
  &\times \Bigl(\int_0^T\!\!\int_{\mathbb{T}^d}
    |P_h^* * u^h|\,|\partial_t^h u^h|^2\,dx\,dt\Bigr)^{1/2} .
\end{align*}
The second factor is at most $(2E_h^*(u^0))^{1/2}$ by
Theorem~\ref{thm:energy-identity}. We have, for $a > \tfrac12$,
\[
  \frac{(1-a^{1/2})^2}{a}
  \;=\; \frac1a\Bigl(\frac{1-a}{1+a^{1/2}}\Bigr)^{2}
  \;\leq\; \frac{(1-a)^2}{a}
  \;\leq\; 2\,(1-a) ,
\]
so that, using $u^h\cdot(P_h^* * u^h) \leq |P_h^* * u^h|$ and $|P_h^* * u^h| > \frac{1}{2}$ in $A_h$, we obtain
\[
  \int_0^T\!\!\int_{\mathbb{T}^d}
    \bigl(1-|P_h^* * u^h|\bigr)\,dx\,dt
  \;\leq\; \int_0^T\!\!\int_{\mathbb{T}^d}
    \bigl(1 - u^h\cdot(P_h^* * u^h)\bigr)\,dx\,dt
  \;\leq\; 2\,T h\,E_h^*(u^0) .
\]
Altogether the difference is bounded by
$2\sqrt2\,E_h^*(u^0)\,\|\varphi\|_{L^\infty}\,(Th)^{1/2} \to 0$.

Since $C_c^\infty(\mathbb{T}^d \times (0,T))$ is dense in
$L^2(\mathbb{T}^d \times (0,T))$ and
$\|\mathbf{1}_{A_h}\,|P_h^* * u^h|^{1/2}\,\partial_t^h u^h\|_{L^2}^2
 \leq 2E_h^*(u^0)$ uniformly in $h$, this gives
\[
  \mathbf{1}_{A_h}\,|P_h^* * u^h|^{1/2}\,\partial_t^h u^h
  \;\rightharpoonup\; \partial_t u
  \qquad\text{in } L^2(\mathbb{T}^d \times (0,T)) .
\]
Fix $t$ and set $k = \lfloor t/h \rfloor$, so that $t_k = kh \leq t$ and
$|t - t_k| \leq h \to 0$. Since $\mathbf{1}_{A_h} \leq 1$ and the integrand is
nonnegative,
\[
  \frac12\int_0^{t_k}\!\!\int_{\mathbb{T}^d}
    |P_h^* * u^h|\,|\partial_t^h u^h|^2\,dx\,ds
  \;\geq\; \frac12\int_0^{t_k}\!\!\int_{\mathbb{T}^d}
    \mathbf{1}_{A_h}\,|P_h^* * u^h|\,|\partial_t^h u^h|^2\,dx\,ds ,
\]
and since $t_k \to t$, weak lower semicontinuity gives
\[
  \liminf_{h\to0}\ \frac12\int_0^{t_k}\!\!\int_{\mathbb{T}^d}
    |P_h^* * u^h|\,|\partial_t^h u^h|^2\,dx\,ds
  \;\geq\; \frac12\int_0^{t}\!\!\int_{\mathbb{T}^d}
    |\partial_t u|^2\,dx\,ds .
\]
Likewise
$\liminf_{h\to0}\tfrac12\int_0^{t_k}\|\partial_t^h(P_{h/2}^**u^h)\|_{L^2}^2\,ds
\ge \tfrac12\int_0^t\|\partial_t u\|_{L^2}^2\,ds$. With $E(u(t))\le\liminf E_h^*(u^h(t_k))$
and $E_h^*(u^0)\to E(u^0)$, the $\liminf$ of
Theorem~\ref{thm:energy-identity} yields
\[
  E(u(t))+\int_0^t\|\partial_t u\|_{L^2}^2\,ds\le E(u^0).\qedhere
\]
\end{proof}

%% file: weak_strong.tex

\begin{proof}[Proof of Theorem~\ref{thm:wsu-main}]
  Let $\tau(u) := -(-\Delta)^{1/2}u + |d_{\frac{1}{2}}u|^2_{\mathrm{od}}\,u$
  denote the tension field, so that $\partial_t u = \tau(u)$.

  We estimate
  $E(u-v)(t) = E(u(t)) + E(v(t))
  - \langle d_{\frac{1}{2}}u(t),\,d_{\frac{1}{2}}v(t)\rangle_{L^2_{\mathrm{od}}}$.
  By the energy inequalities for $u$ and $v$,
  \begin{equation}\label{eq:hhmhf-wsu-start}
    E(u-v)(t)
    \;\leq\;
    2E(u^0)
    - \int_0^t \bigl(\|\tau(u)\|_{L^2}^2 + \|\tau(v)\|_{L^2}^2\bigr)\,ds
    - \langle d_{\frac{1}{2}}u(t),\,d_{\frac{1}{2}}v(t)\rangle_{L^2_{\mathrm{od}}}.
  \end{equation}

  We differentiate the cross term. By the integration by parts formula,
  \[
    \langle d_{\frac{1}{2}}u,\,d_{\frac{1}{2}}v\rangle_{L^2_{\mathrm{od}}}
    = \langle u,\,(-\Delta)^{1/2}v\rangle_{L^2} .
  \]
  By the product rule,
  \[
    \partial_t\langle d_{\frac{1}{2}}u,\,d_{\frac{1}{2}}v\rangle_{L^2_{\mathrm{od}}}
    = \langle u_t,\,(-\Delta)^{1/2}v\rangle_{L^2}
     + \langle d_{\frac{1}{2}}u,\,d_{\frac{1}{2}}v_t\rangle_{L^2_{\mathrm{od}}}.
  \]
  Integrating in time and using $u|_{t=0} = v|_{t=0} = u^0$:
  \[
    \langle d_{\frac{1}{2}}u(t),\,d_{\frac{1}{2}}v(t)\rangle_{L^2_{\mathrm{od}}}
    = 2E(u^0)
    + \int_0^t\bigl(
      \langle u_s,\,(-\Delta)^{1/2}v\rangle_{L^2}
      + \langle d_{\frac{1}{2}}u,\,d_{\frac{1}{2}}v_s\rangle_{L^2_{\mathrm{od}}}
    \bigr)\,ds.
  \]
  Substituting into \eqref{eq:hhmhf-wsu-start}, the $2E(u^0)$ cancels:
  \begin{equation}\label{eq:hhmhf-wsu-cross}
  \begin{split}
    E(u-v)(t)
    \;\leq\;
    &- \int_0^t \bigl(\|\tau(u)\|_{L^2}^2 + \|\tau(v)\|_{L^2}^2\bigr)\,ds \\
    &- \int_0^t\bigl(
      \langle u_s,\,(-\Delta)^{1/2}v\rangle_{L^2}
      + \langle d_{\frac{1}{2}}u,\,d_{\frac{1}{2}}v_s\rangle_{L^2_{\mathrm{od}}}
    \bigr)\,ds.
  \end{split}
  \end{equation}

  We expand the two integrands separately.
  For the first, insert $u_s = \tau(u)$ and use \eqref{eq:HHMHF}, i.e.,
  $(-\Delta)^{1/2}v = |d_{\frac{1}{2}}v|^2_{\mathrm{od}}v - v_s$,
  to obtain
  \[
    \langle u_s,\,(-\Delta)^{1/2}v\rangle_{L^2}
    = \langle|d_{\frac{1}{2}}v|^2_{\mathrm{od}},\,v\cdot u_s\rangle_{L^2}
      - \langle\tau(u),\,\tau(v)\rangle_{L^2}.
  \]
  For the second, take $\varphi = v_s$ in the weak formulation
  \eqref{eq:weak-hhmhf-main} of $u$, which is admissible since
  $v_s \in H^{1/2}\cap L^\infty(\mathbb{T}^d)$ for a.e.\ $s$:
  \[
    \langle d_{\frac{1}{2}}u,\,d_{\frac{1}{2}}v_s\rangle_{L^2_{\mathrm{od}}}
    = \langle|d_{\frac{1}{2}}u|^2_{\mathrm{od}},\,u\cdot v_s\rangle_{L^2}
      - \langle\tau(u),\,\tau(v)\rangle_{L^2}.
  \]
  Adding, substituting into \eqref{eq:hhmhf-wsu-cross}, and using
  \[-\|\tau(u)\|^2 - \|\tau(v)\|^2 + 2\langle\tau(u),\tau(v)\rangle
  = -\|\tau(u)-\tau(v)\|^2\]
  we have that
  \begin{equation}\label{eq:hhmhf-wsu-tau}
    E(u-v)(t)
    \;\leq\;
    - \int_0^t \|\tau(u)-\tau(v)\|_{L^2}^2\,ds
    - \int_0^t\Bigl(
      \langle|d_{\frac{1}{2}}u|^2_{\mathrm{od}},\,u\cdot v_s\rangle_{L^2}
      + \langle|d_{\frac{1}{2}}v|^2_{\mathrm{od}},\,v\cdot u_s\rangle_{L^2}
    \Bigr)\,ds.
  \end{equation}

  We simplify the nonlinear terms in \eqref{eq:hhmhf-wsu-tau}.
  Since $|v|^2=1$ we have $v\cdot v_s=0$, and since $|u|^2=1$ we have
  $u\cdot u_s=0$. Therefore
  \begin{align*}
    &\langle|d_{\frac{1}{2}}u|^2_{\mathrm{od}},\,u\cdot v_s\rangle_{L^2}
     + \langle|d_{\frac{1}{2}}v|^2_{\mathrm{od}},\,v\cdot u_s\rangle_{L^2} \\
    &= \langle|d_{\frac{1}{2}}u|^2_{\mathrm{od}},\,(u-v)\cdot v_s\rangle_{L^2}
     - \langle|d_{\frac{1}{2}}v|^2_{\mathrm{od}},\,(u-v)\cdot u_s\rangle_{L^2} \\
    &= \langle|d_{\frac{1}{2}}u|^2_{\mathrm{od}}-|d_{\frac{1}{2}}v|^2_{\mathrm{od}},\,
         (u-v)\cdot v_s\rangle_{L^2}
     + \langle|d_{\frac{1}{2}}v|^2_{\mathrm{od}},\,(u-v)\cdot(v_s-u_s)\rangle_{L^2} \\
    &= \langle|d_{\frac{1}{2}}u|^2_{\mathrm{od}}-|d_{\frac{1}{2}}v|^2_{\mathrm{od}},\,
         (u-v)\cdot v_s\rangle_{L^2}
     - \langle|d_{\frac{1}{2}}v|^2_{\mathrm{od}},\,
         \partial_s\tfrac{|u-v|^2}{2}\rangle_{L^2},
  \end{align*}
  where in the first step we used $v\cdot v_s = 0$ and $u\cdot u_s = 0$,
  in the second we added and subtracted
  $\langle|d_{\frac{1}{2}}v|^2_{\mathrm{od}},\,(u-v)\cdot v_s\rangle_{L^2}$,
  and in the third we used $(u-v)\cdot(u_s-v_s) = \partial_s\tfrac{|u-v|^2}{2}$.

  Substituting back into \eqref{eq:hhmhf-wsu-tau} and integrating the last
  term by parts in time, using $u|_{s=0}=v|_{s=0}=u^0$:
  \begin{align*}
    \int_0^t\langle|d_{\frac{1}{2}}v|^2_{\mathrm{od}},\,
      \partial_s\tfrac{|u-v|^2}{2}\rangle_{L^2}\,ds
    &= \Bigl[\langle|d_{\frac{1}{2}}v|^2_{\mathrm{od}},\,
        \tfrac{|u-v|^2}{2}\rangle_{L^2}\Bigr]_0^t
     - \int_0^t\langle
         \partial_s|d_{\frac{1}{2}}v|^2_{\mathrm{od}},\,
         \tfrac{|u-v|^2}{2}\rangle_{L^2}\,ds \\
    &= \langle|d_{\frac{1}{2}}v(t)|^2_{\mathrm{od}},\,
        \tfrac{|u(t)-v(t)|^2}{2}\rangle_{L^2} \\
    &\quad - \int_0^t\langle
         \langle d_{\frac{1}{2}}v,\,d_{\frac{1}{2}}v_s\rangle_{\mathrm{od}},\,|u-v|^2
       \rangle_{L^2}\,ds,
  \end{align*}
  Hence, dropping the non-positive $\tau$-term:
  \begin{align}\label{eq:hhmhf-wsu-ibp}
    E(u-v)(t)
    \;\leq\;&
    \langle|d_{\frac{1}{2}}v(t)|^2_{\mathrm{od}},\,\tfrac{|u(t)-v(t)|^2}{2}\rangle_{L^2}
    \notag\\
    &- \int_0^t\langle|d_{\frac{1}{2}}u|^2_{\mathrm{od}}-|d_{\frac{1}{2}}v|^2_{\mathrm{od}},\,
        (u-v)\cdot v_s\rangle_{L^2}\,ds
    \notag\\
    &- \int_0^t\langle
        \langle d_{\frac{1}{2}}v,\,d_{\frac{1}{2}}v_s\rangle_{\mathrm{od}},\,|u-v|^2
      \rangle_{L^2}\,ds.
  \end{align}

  Set $\mathcal{E}_{1/2}(t)
  := E(u-v)(t) + \tfrac{1}{2}\|u(t)-v(t)\|_{L^2}^2$
  and $w := u-v$.  Since $|u|=|v|=1$ we have $|w|\leq 2$ and
  \begin{equation}\label{eq:hhmhf-grad-split}
    |d_{\frac{1}{2}}u|^2_{\mathrm{od}} - |d_{\frac{1}{2}}v|^2_{\mathrm{od}}
    = |d_{\frac{1}{2}}w|^2_{\mathrm{od}} + 2\,\langle d_{\frac{1}{2}}v,\,d_{\frac{1}{2}}w\rangle_{\mathrm{od}}.
  \end{equation}
  Since $v$ is a strong solution, the constants
  \[
    M_1 := \|d_{\frac{1}{2}}v\|_{L^\infty(0,T;\,L^\infty_{\mathrm{od}})},
    \qquad
    M_2 := \|v_s\|_{L^\infty((0,T)\times\mathbb{T}^d)},
    \qquad
    M_3 := \|d_{\frac{1}{2}}v_s\|_{L^\infty(0,T;\,L^\infty_{\mathrm{od}})}
  \]
  are finite.

  \textit{Estimate of the boundary term.}
  By Hölder's inequality and $\tfrac{1}{2}\|w\|_{L^2}^2\leq\mathcal{E}_{1/2}$:
  \begin{equation}\label{eq:hhmhf-wsu-bdy}
    \langle|d_{\frac{1}{2}}v(t)|^2_{\mathrm{od}},\,\tfrac{|w|^2}{2}\rangle_{L^2}
    \leq M_1^2\,\tfrac{1}{2}\|w(t)\|_{L^2}^2
    \leq M_1^2\,\mathcal{E}_{1/2}(t).
  \end{equation}

  \textit{Estimate of the middle integral.}
  Inserting \eqref{eq:hhmhf-grad-split} and using $|w|\leq 2$:
  \begin{align*}
    &\bigl|\langle|d_{\frac{1}{2}}u|^2_{\mathrm{od}}-|d_{\frac{1}{2}}v|^2_{\mathrm{od}},\,
       w\cdot v_s\rangle_{L^2}\bigr| \\
    &\leq M_2\|w\|_{L^\infty}\|d_{\frac{1}{2}}w\|_{L^2_{\mathrm{od}}}^2
     + 2M_1 M_2\|d_{\frac{1}{2}}w\|_{L^2_{\mathrm{od}}}\|w\|_{L^2} \\
    &\leq 2M_2\cdot 2E(u-v)
     + M_1 M_2\bigl(\|d_{\frac{1}{2}}w\|_{L^2_{\mathrm{od}}}^2 + \|w\|_{L^2}^2\bigr),
  \end{align*}
  using Young's inequality.  Hence
  \begin{equation}\label{eq:hhmhf-wsu-mid}
    \int_0^t\bigl|\langle|d_{\frac{1}{2}}u|^2_{\mathrm{od}}-|d_{\frac{1}{2}}v|^2_{\mathrm{od}},\,
      w\cdot v_s\rangle_{L^2}\bigr|\,ds
    \leq (4M_2+2M_1 M_2)\int_0^t\mathcal{E}_{1/2}(s)\,ds.
  \end{equation}

  \textit{Estimate of the last integral.}
  Since $v$ is a strong solution, we have
  $\|d_{\frac{1}{2}}v\|_{L^\infty_{\mathrm{od}}}\leq M_1$ and
  $\|d_{\frac{1}{2}}v_s\|_{L^\infty_{\mathrm{od}}}\leq M_3$, hence
  \begin{align*}
    \bigl|\langle\langle d_{\frac{1}{2}}v,\,d_{\frac{1}{2}}v_s\rangle_{\mathrm{od}},\,
      |w|^2\rangle_{L^2}\bigr|
    &\leq M_1 M_3\,\|w\|_{L^2}^2
    \leq 2M_1 M_3\,\mathcal{E}_{1/2}.
  \end{align*}
  Therefore
  \begin{equation}\label{eq:hhmhf-wsu-last}
    \int_0^t\bigl|\langle\langle d_{\frac{1}{2}}v,\,d_{\frac{1}{2}}v_s\rangle_{\mathrm{od}},\,|w|^2
      \rangle_{L^2}\bigr|\,ds
    \leq 2M_1 M_3\int_0^t\mathcal{E}_{1/2}(s)\,ds.
  \end{equation}

  \textit{Estimate of $\partial_t\frac{1}{2}\|w\|_{L^2}^2$.}
  Since $u$ satisfies \eqref{eq:weak-hhmhf-main} with test function $w$
  and $v$ satisfies the \eqref{eq:weak-hhmhf-main} pointwise,
  \begin{align*}
    \partial_t\tfrac{1}{2}\|w\|_{L^2}^2
    &= \langle u_t-v_t,\,w\rangle_{L^2} \\
    &= -\langle d_{\frac{1}{2}}u,\,d_{\frac{1}{2}}w\rangle_{L^2_{\mathrm{od}}}
     + \langle|d_{\frac{1}{2}}u|^2_{\mathrm{od}}u,\,w\rangle_{L^2}
     + \langle d_{\frac{1}{2}}v,\,d_{\frac{1}{2}}w\rangle_{L^2_{\mathrm{od}}}
     - \langle|d_{\frac{1}{2}}v|^2_{\mathrm{od}}v,\,w\rangle_{L^2} \\
    &= -\|d_{\frac{1}{2}}w\|_{L^2_{\mathrm{od}}}^2
     + \underbrace{
         \langle|d_{\frac{1}{2}}u|^2_{\mathrm{od}}w,\,w\rangle_{L^2}
       }_{=:I_1}
     + \underbrace{
         \langle(|d_{\frac{1}{2}}u|^2_{\mathrm{od}}-|d_{\frac{1}{2}}v|^2_{\mathrm{od}})v,\,
         w\rangle_{L^2}
       }_{=:I_2}.
  \end{align*}
  For $I_1$, expand $|d_{\frac{1}{2}}u|^2_{\mathrm{od}}
  = |d_{\frac{1}{2}}(w+v)|^2_{\mathrm{od}}$ using \eqref{eq:hhmhf-grad-split}:
  \begin{align*}
    I_1 &= \int_{\mathbb{T}^d}|d_{\frac{1}{2}}u|^2_{\mathrm{od}}\,|w|^2\,dx \\
        &= \int_{\mathbb{T}^d}|d_{\frac{1}{2}}w|^2_{\mathrm{od}}\,|w|^2\,dx
         + 2\int_{\mathbb{T}^d}\langle d_{\frac{1}{2}}v,\,d_{\frac{1}{2}}w\rangle_{\mathrm{od}}\,|w|^2\,dx
         + \int_{\mathbb{T}^d}|d_{\frac{1}{2}}v|^2_{\mathrm{od}}\,|w|^2\,dx \\
        &\leq \|w\|_{L^\infty}^2\|d_{\frac{1}{2}}w\|_{L^2_{\mathrm{od}}}^2
         + 2M_1\|d_{\frac{1}{2}}w\|_{L^2_{\mathrm{od}}}\|w\|_{L^\infty}\|w\|_{L^2}
         + M_1^2\|w\|_{L^2}^2 \\
        &\leq 4\|d_{\frac{1}{2}}w\|_{L^2_{\mathrm{od}}}^2
         + 4M_1\|d_{\frac{1}{2}}w\|_{L^2_{\mathrm{od}}}\|w\|_{L^2}
         + M_1^2\|w\|_{L^2}^2,
  \end{align*}
  using $\|w\|_{L^\infty}\leq 2$.  Applying Young's inequality
  $4M_1 ab\leq 2M_1(a^2+b^2)$:
  \[
    I_1 \leq (4+2M_1)\|d_{\frac{1}{2}}w\|_{L^2_{\mathrm{od}}}^2
         + (2M_1+M_1^2)\|w\|_{L^2}^2
    \leq C_1(v)\,\mathcal{E}_{1/2},
  \]
  where $C_1(v) := 2(4+4M_1+M_1^2)$.

  For $I_2$, use \eqref{eq:hhmhf-grad-split}, $|v|=1$, and $|w|\leq 2$:
  \begin{align*}
    I_2 &= \langle|d_{\frac{1}{2}}w|^2_{\mathrm{od}}v
           + 2\langle d_{\frac{1}{2}}v,\,d_{\frac{1}{2}}w\rangle_{\mathrm{od}}v,\,w\rangle_{L^2} \\
        &\leq \|w\|_{L^\infty}\|d_{\frac{1}{2}}w\|_{L^2_{\mathrm{od}}}^2
         + 2M_1\|d_{\frac{1}{2}}w\|_{L^2_{\mathrm{od}}}\|w\|_{L^2} \\
        &\leq 2\|d_{\frac{1}{2}}w\|_{L^2_{\mathrm{od}}}^2
         + M_1\bigl(\|d_{\frac{1}{2}}w\|_{L^2_{\mathrm{od}}}^2+\|w\|_{L^2}^2\bigr)
    \;\leq\; C_2(v)\,\mathcal{E}_{1/2},
  \end{align*}
  where $C_2(v) := 2(2+M_1)$.
  Dropping $-\|d_{\frac{1}{2}}w\|_{L^2_{\mathrm{od}}}^2 \leq 0$ and combining:
  \begin{equation}\label{eq:hhmhf-wsu-L2}
    \partial_t\tfrac{1}{2}\|w(t)\|_{L^2}^2
    \;\leq\; C_3(v)\,\mathcal{E}_{1/2}(t),
    \quad C_3(v) := C_1(v)+C_2(v).
  \end{equation}
  Integrating from $0$ to $t$ with $w|_{t=0}=0$:
  \begin{equation}\label{eq:hhmhf-wsu-L2int}
    \tfrac{1}{2}\|w(t)\|_{L^2}^2
    \;\leq\; C_3(v)\int_0^t\mathcal{E}_{1/2}(s)\,ds.
  \end{equation}

  \textit{Closing the Grönwall inequality.}
  Substituting \eqref{eq:hhmhf-wsu-bdy}, \eqref{eq:hhmhf-wsu-mid},
  \eqref{eq:hhmhf-wsu-last} into \eqref{eq:hhmhf-wsu-ibp} and
  applying \eqref{eq:hhmhf-wsu-L2int}:
  \[
    E(u-v)(t)
    \leq M_1^2\,\tfrac{1}{2}\|w(t)\|_{L^2}^2
    + (4M_2+2M_1 M_2+2M_1 M_3)\int_0^t\mathcal{E}_{1/2}(s)\,ds
    \leq C(v)\int_0^t\mathcal{E}_{1/2}(s)\,ds,
  \]
  where $C(v) := M_1^2 C_3(v)+4M_2+2M_1 M_2+2M_1 M_3$.
  Adding \eqref{eq:hhmhf-wsu-L2int}:
  \begin{equation}\label{eq:hhmhf-wsu-gronwall}
    \mathcal{E}_{1/2}(t)
    \;\leq\; \bigl(C(v)+C_3(v)\bigr)\int_0^t\mathcal{E}_{1/2}(s)\,ds.
  \end{equation}
  Since $\mathcal{E}_{1/2}(0) = 0$ (as $u|_{t=0}=v|_{t=0}=u^0$),
  Grönwall's lemma applied to \eqref{eq:hhmhf-wsu-gronwall} gives
  $\mathcal{E}_{1/2}(t) = 0$ for all $t\in[0,T]$.
  Hence $E(u-v)(t) = 0$ and $\|u(t)-v(t)\|_{L^2} = 0$,
  which gives $u = v$ a.e.\ on $\mathbb{T}^d\times(0,T)$.
\end{proof}

